\newtheorem{lemma}{Lemma}[section]
\newtheorem{teo}[lemma]{Theorem}
\newtheorem{prop}[lemma]{Proposition}
\theoremstyle{definition}
\newtheorem{defn}[lemma]{Definition}
\theoremstyle{remark}
\newtheorem{rem}[lemma]{Remark}
\newcommand{\nota} [1] {\caption{\footnotesize{#1}}}
\newcommand{\matR} {\ensuremath {\mathbb{R}}}
\newcommand{\matZ} {\ensuremath {\mathbb{Z}}}
\newcommand{\matH} {\ensuremath {\mathbb{H}}}
\newcommand{\matS} {\ensuremath {\mathbb{S}}}
\newcommand{\calX} {\ensuremath {\mathcal{X}}}
\newcommand{\calR} {\ensuremath {\mathcal{R}}}
\newcommand{\calN} {\ensuremath {\mathcal{N}}}
\newcommand{\calL} {\ensuremath {\mathcal{L}}}
\newcommand{\calS} {\ensuremath {\mathcal{S}}}
\newcommand{\calM} {\ensuremath {\mathcal{M}}}
\newcommand{\calC} {\ensuremath {\mathcal{C}}}
\newcommand{\calD} {\ensuremath {\mathcal{D}}}
\newcommand{\calE} {\ensuremath {\mathcal{E}}}
\newcommand{\calF} {\ensuremath {\mathcal{F}}}
\newcommand{\calP} {\ensuremath {\mathcal{P}}}
\newcommand{\calT} {\ensuremath {\mathcal{T}}}
\newcommand{\calH}{\ensuremath {\mathcal{H}}}
\newcommand{\calG}{\ensuremath {\mathcal{G}}}
\newcommand{\calZ}{\ensuremath {\mathcal{Z}}}
\newcommand\reallywidehat[1]{%
\savestack{\tmpbox}{\stretchto{%
  \scaleto{%
    \scalerel*[\widthof{\ensuremath{#1}}]{\kern-.6pt\bigwedge\kern-.6pt}%
    {\rule[-\textheight/2]{1ex}{\textheight}}
  }{\textheight}%
}{0.5ex}}%
\stackon[1pt]{#1}{\tmpbox}%
}
\begin{document}
\title{New hyperbolic 4-manifolds of low volume}
\author{Stefano Riolo, Leone Slavich}
\date{}
\maketitle

\begin{abstract}
\noindent 
We prove that there are at least two commensurability classes of (cusped, arithmetic) minimal-volume hyperbolic $4$-manifolds.
Moreover, by applying a well-known technique due to Gromov and Piatetski-Shapiro, we build the smallest known non-arithmetic hyperbolic $4$-manifold.\end{abstract}

\section{Introduction}
A \emph{hyperbolic manifold} is a manifold equipped with a Riemannian metric of constant sectional curvature equal to $-1$.
Throughout this paper, hyperbolic manifolds are assumed to be complete and of finite volume.

An important invariant of a hyperbolic manifold is its volume.
Since it is often regarded as a measure of complexity, it is reasonable to look for manifolds of low volume.
In this regard, recall that the Gauss-Bonnet formula relates the volume of a hyperbolic $4$-manifold $M$ to its Euler characteristic in the following way:
$$\mathrm{Vol}(M)=\frac{4\pi^2}{3}\chi(M).$$
Moreover, by the work of Wang \cite{Wang}, for any $n\geq4$ and $V>0$, there is at most a finite number of (isometry classes of) hyperbolic $n$-manifolds with volume bounded by $V$.

Let us now draw attention to \emph{minimal-volume} hyperbolic manifolds.
In dimension two, there are uncountably many such manifolds. Up to diffeomorphism, by the Gauss-Bonnet formula these are just three: the connected sum of three projective planes, the thrice-punctured sphere and the once-punctured torus. The former is closed, while the latter two are cusped.
In contrast, in dimension three there is a unique orientable hyperbolic manifold of minimal volume: the so called Fomenko-Matveev-Weeks manifold \cite{FM,Weeks,weeks_mfd}, which is closed. The smallest cusped hyperbolic 3-manifold is the Gieseking manifold \cite{gies_mfd} (this manifold will play a role in this paper).

From now on, let us focus on dimension four. 
In $2000$, Ratcliffe and Tschantz produced a census of $1171$ cusped hyperbolic $4$-manifolds, all tessellated by a single copy of a hyperbolic regular polytope, the ideal right-angled $24$-cell $\calC$, which has volume
$$V_{\mathrm{min}}=\frac{4\pi^2}{3}.$$
These manifolds are thus of minimal volume.
For comparison, the smallest known closed hyperbolic $4$-manifolds have volume $8\cdot V_{\mathrm{min}}$ \cite{CM,Long}, but the minimal volume of a closed hyperbolic $4$-manifold is still unknown. 

At the moment, computing the exact number of hyperbolic $4$-manifolds with volume $V_{\mathrm{min}}$ seems to be an unrealistic expectation. An explicit bound is still unknown, and this number may be enormous. Counting such manifolds up to commensurability is perhaps a simpler task. (Recall that two manifolds are \emph{commensurable} if there is a third manifold finitely covering both.)  
For instance, all the manifolds from the Ratcliffe and Tschantz's census are commensurable.

In a recent survey about hyperbolic 4-manifolds \cite[Section 4, Question 6]{BruSurv}, Martelli asks whether all hyperbolic $4$-manifolds of volume $V_{\mathrm{min}}$ are commensurable.
The main result of the present paper is the following:
\begin{teo}\label{cor:cor1}
There exist at least two commensurability classes of (cusped, arithmetic) hyperbolic $4$-manifolds containing an orientable manifold of minimal volume.  
\end{teo}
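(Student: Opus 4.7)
The plan is to exhibit one further orientable hyperbolic 4-manifold $M$ of volume $V_{\mathrm{min}}$ that is not commensurable with any manifold in the Ratcliffe--Tschantz census. Since the entire census lies in a single commensurability class (its members share a common tessellation by the ideal right-angled 24-cell $\calC$), producing a single such $M$ already yields two commensurability classes. The natural framework is the arithmetic classification of non-cocompact lattices in $\mathrm{Isom}(\mathbb{H}^4)$: every such lattice is commensurable with $\mathrm{O}(q;\mathbb{Z})$ for some rational quadratic form $q$ of signature $(4,1)$, and two such lattices are commensurable if and only if the associated forms are similar over $\mathbb{Q}$. The Ratcliffe--Tschantz class corresponds to the standard form $q_{0}=\langle 1,1,1,1,-1\rangle$, so the target $M$ should come from a form in a distinct similarity class, for instance $\langle 1,1,1,1,-d\rangle$ with some squarefree $d>1$.

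Concretely, I would first locate a small-volume cusped hyperbolic Coxeter 4-orbifold $\mathcal{O}$ whose commensurator is arithmetic with invariant a form $q$ not similar to $q_{0}$; the known classifications of hyperbolic Coxeter polytopes in dimension four offer only a handful of candidates. Second, I would construct an explicit torsion-free, orientation-preserving, finite-index subgroup of the Coxeter reflection group whose quotient manifold $M$ satisfies $\chi(M)=1$, and hence $\mathrm{Vol}(M)=V_{\mathrm{min}}$. The combinatorial core of this step is to build a homomorphism from the Coxeter group onto a finite group whose kernel is torsion-free and whose index equals the reciprocal of the rational Euler characteristic of $\mathcal{O}$, or equivalently to pair the facets of a fundamental polytope so that the resulting identifications yield a smooth manifold. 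Orientability can then be enforced by passing, if necessary, to the kernel of the determinant homomorphism. The role of the Gieseking 3-manifold foreshadowed in the introduction suggests that totally geodesic 3-dimensional submanifolds may organise these identifications.

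Finally, I would verify non-commensurability between $M$ and the Ratcliffe--Tschantz class by an arithmetic invariant: comparing Hasse--Witt invariants of the defining quadratic forms, or, more geometrically, comparing cusp cross-sections. Commensurable cusped hyperbolic 4-manifolds have similarity-commensurable flat cusp sections, so exhibiting a cusp of $M$ whose flat 3-manifold similarity type does not appear among the (explicitly enumerated) cusp sections of any 24-cell manifold suffices. I expect the principal obstacle to lie in the construction step: the rational Euler characteristic of a candidate orbifold $\mathcal{O}$ must be of the form $1/N$ for an integer $N$ realisable as the index of a torsion-free, orientation-preserving subgroup, and arranging this so that $\chi(M)$ is exactly one, forcing $M$ to be of minimal and not merely small volume, is delicate and will likely demand a bespoke geometric assembly of copies of $\mathcal{O}$ together with computer-assisted coset enumeration. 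Once $M$ has been produced, reading off the arithmetic or cusp invariants to separate it from the 24-cell class is a direct calculation.
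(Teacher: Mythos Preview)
Your outline is strategically sound and in fact matches the paper's approach almost exactly: exhibit one orientable cusped hyperbolic 4-manifold of volume $V_{\mathrm{min}}$ commensurable with an arithmetic Coxeter polytope whose arithmetic invariants differ from those of the 24-cell $\calC$. But what you have written is a research plan, not a proof. The entire content of the theorem lies in the step you flag as ``the principal obstacle'': actually producing such a manifold. You have not named a candidate polytope, have not constructed a torsion-free subgroup of index exactly equal to the reciprocal of its rational Euler characteristic, and have not verified orientability. Until that is done, nothing has been proved.

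The paper fills this gap concretely. The second polytope is the ideal rectified 5-cell $\calR$, which has $\mathrm{Vol}(\calR)=V_{\mathrm{min}}/6$. Rather than searching for a torsion-free subgroup by coset enumeration, the paper takes the manifold $\calZ$ already built in \cite{S}, which is tessellated by six copies of $\calR$ and has totally geodesic boundary isometric to the figure-eight knot complement; quotienting this boundary by the orientation-reversing fixed-point-free involution whose quotient is the Gieseking manifold yields an \emph{orientable} manifold $\calZ'$ without boundary, of volume exactly $V_{\mathrm{min}}$, and commensurable with $\calR$ (Lemma~\ref{lemma:commensurability}). Your intuition that the Gieseking manifold organises the identifications is therefore correct, but it enters as a quotient of a known bounded 4-manifold, not as a guide for facet pairings of a Coxeter polytope. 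For non-commensurability the paper does not compare cusp sections or Hasse--Witt symbols directly; it invokes Maclachlan's classification via ramification sets of the associated quaternion algebras, which for $\calC$ is empty and for $\calR/_{\mathfrak S_5}$ is $\{3,\infty\}$ (computed in \cite{GJK}). This is morally the same invariant you propose, but the computation is already in the literature and requires no new work.
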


Each of these two commensurability classes is associated to a (non-compact, arithmetic) hyperbolic Coxeter polytope: the ideal regular $24$-cell $\calC$, and the ideal rectified $5$-cell $\calR$, respectively.
These two commensurability classes are represented by Coxeter diagrams in Figure \ref{fig:diagrammi} (left).
The manifolds of Ratcliffe and Tschantz are commensurable (in the orbifold sense, see Section \ref{sec:commensurabilty-invariants}) with the $24$-cell $\calC$.
Another minimal-volume manifold, commensurable with the rectified $5$-cell $\calR$, is obtained by slightly modifying a construction of the second author \cite{S}. 
The reflection groups associated to these two Coxeter polytopes are arithmetic. This allows us to distinguish their commensurability classes by applying the work of Maclachlan \cite{Mac}.

Having established the existence of two commensurability classes of minimal volume hyperbolic $4$-manifold, we turn our attention to manifolds with twice the minimal volume, namely $8\pi^2/3$.
Our main objective is constructing a (non-orientable) hyperbolic $4$-manifold $\calN$ of volume $2\cdot V_{\mathrm{min}}$, commensurable with a Coxeter polytope $\calP$ first introduced by Kerckhoff and Storm in \cite{KS} and further studied in detail in \cite{riomar} (see our Theorem \ref{teo:main1}).
The manifold $\calN$ is explicitly built by pairing the facets of two copies of the polytope $\calP$.
Our construction is similar to that of another manifold of volume $2\cdot V_{\mathrm{min}}$ built in \cite{riomar}. In fact, these two manifolds are commensurable, although not isometric. Thus, there is no reason to consider our manifold $\calN$ special. On the other hand, its construction suggests that the polytope $\calP$ may be used to build more manifolds of low volume.

Finally, we note that all the known examples of minimal-volume hyperbolic $4$-manifolds are arithmetic, so one could wonder whether there exists a non-arithmetic hyperbolic $4$-manifold of volume $V_{\mathrm{min}}$ or, more generally, what the minimal volume of a non-arithmetic hyperbolic $4$-manifold might be. We prove the following:

\begin{teo}\label{teo:main3}
There exist two non-arithmetic cusped hyperbolic 4-manifolds $\calH$ and $\calH'$ such that
\begin{itemize}
\item $\calH$ is non-orientable and $\chi(\calH)=3$,
\item $\calH'$ is orientable and $\chi(\calH')=5$.
\end{itemize} 
\end{teo}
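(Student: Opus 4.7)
The plan is to apply the \emph{interbreeding} construction of Gromov and Piatetski-Shapiro, as anticipated in the abstract, to the two distinct commensurability classes of cusped arithmetic hyperbolic $4$-manifolds established in Theorem \ref{cor:cor1}. Recall the general recipe: given arithmetic hyperbolic manifolds $M_1, M_2$ belonging to distinct commensurability classes, each containing an embedded, two-sided totally geodesic hypersurface $\Sigma_i \subset M_i$, one cuts along $\Sigma_i$ and reglues $M_1$ and $M_2$ via an isometry between matching boundary components. The resulting hybrid is a hyperbolic manifold that is necessarily non-arithmetic, since arithmeticity would force the hybrid to be commensurable with both pieces, contradicting their non-commensurability.

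To implement this in the present setting, I would take $M_1$ to be a small cusped hyperbolic $4$-manifold in the commensurability class of the ideal regular $24$-cell $\calC$ and $M_2$ to be a small cusped hyperbolic $4$-manifold in the commensurability class of the ideal rectified $5$-cell $\calR$, and exhibit in each an embedded totally geodesic hypersurface together with a common isometric model. A natural candidate model is a cusped hyperbolic $3$-manifold tessellated by a small number of ideal right-angled octahedra, since the ideal right-angled octahedron appears as a mirror wall of $\calC$ and (directly) as a facet of $\calR$. After pairing facets of finitely many copies of $\calC$ and $\calR$ respectively so as to produce manifolds with non-empty totally geodesic boundary, and choosing an isometry between a matching pair of boundary components, one obtains the desired hybrid. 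Since every closed $3$-manifold has vanishing Euler characteristic, additivity of $\chi$ along the cut gives that the Euler characteristic of the hybrid equals the sum of those of the two pieces; by controlling how many copies of $\calC$ and $\calR$ enter each side, the total $\chi$ can be tuned to the target values $3$ and $5$.

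The main obstacle is the explicit realisation: one must produce, in each commensurability class, a cusped manifold with non-empty totally geodesic boundary of sufficiently small Euler characteristic, verify combinatorially that a pair of matching boundary components admit a hyperbolic isometry between them, and control the orientability of the resulting hybrid. Specifically, $\calH$ should arise from a gluing that does not preserve a global orientation on the union (e.g.\ one of the pieces is non-orientable, or the identification is orientation-reversing with respect to coherent orientations on the two sides), while $\calH'$ should be constructed from orientable pieces glued by an orientation-preserving isometry of the boundary. Non-commensurability of the two pieces, inherited from Theorem \ref{cor:cor1} via the arithmetic invariants of Maclachlan, makes the non-arithmeticity of $\calH$ and $\calH'$ automatic from the Gromov--Piatetski-Shapiro criterion, so once the hybrid is assembled no further invariant computation is required.
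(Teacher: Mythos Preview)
Your high-level plan—interbreed two incommensurable arithmetic pieces along a common totally geodesic boundary and invoke Gromov--Piatetski-Shapiro—is exactly right, and your remark that $\chi$ is additive because the separating hypersurface is a $3$-manifold is also correct. However, the implementation you sketch diverges from the paper in two respects, and as written it leaves a genuine gap.

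First, the paper does \emph{not} interbreed the two commensurability classes of Theorem~\ref{cor:cor1} (that of the $24$-cell $\calC$ and that of the rectified $5$-cell $\calR$). Instead it uses the class of $\calR$ together with the class of the Kerckhoff--Storm polytope $\calP$, via the manifold $\calN$ of Theorem~\ref{teo:main1}. Concretely, one cuts $\calN$ (with $\chi(\calN)=2$) along an embedded totally geodesic Gieseking manifold to obtain a piece $\calN_{\slash\!\!\slash}$ with $\chi=2$ whose boundary is the figure-eight knot complement; this is glued to the manifold $\calZ$ of \cite{S} (commensurable with $\calR$, $\chi(\calZ)=1$, same boundary) to produce $\calH$ with $\chi(\calH)=3$. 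For $\calH'$ one passes to the orientable double cover $\widetilde{\calN_{\slash\!\!\slash}}$ (so $\chi=4$), caps off one of its two figure-eight boundary components by the fixed-point-free involution $\iota$, and glues the remaining boundary to $\calZ$, giving $\chi(\calH')=5$. Non-arithmeticity then follows from Proposition~\ref{prop:PRC-not-commensurable} (incommensurability of $\calP$ and $\calR$) and Theorem~\ref{teo:G-PS}.

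Second, your suggested boundary model—a $3$-manifold tessellated by ideal right-angled octahedra—is not what is used, and you do not exhibit any explicit piece in either class with such a boundary and the required Euler characteristic. The paper's common boundary is the figure-eight knot complement (two ideal regular \emph{tetrahedra}), for which ready-made pieces ($\calZ$ on the $\calR$ side, $\calN_{\slash\!\!\slash}$ on the $\calP$ side) are already available from \cite{S} and Section~\ref{sec:minvolume}. Without producing concrete bounded manifolds of the right $\chi$ and matching boundaries, your outline remains a strategy rather than a proof; in particular, hitting the exact targets $\chi=3$ and $\chi=5$ with the $\calC$/$\calR$ pair and an octahedral interface would itself require a construction of the same order of difficulty as the one you are trying to avoid.
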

The proof is a simple application of the well-known ``interbreeding'' technique introduced by Gromov and Piatetski-Shapiro \cite{G-PS}.
The manifold constructed in \cite{S} has totally geodesic boundary isometric to the figure-eight knot complement. By cutting our manifold $\calN$ (or its orientable double covering) along a totally geodesic hypersurface, we get another hyperbolic manifold bounding the figure-eight knot complement. 
We glue the two manifolds through an isometry of their boundaries to get the non-arithmetic $\calH$ and $\calH'$.
To the best of the authors' knowledge, these are the smallest known examples of non-arithmetic hyperbolic $4$-manifolds.

The paper is organized as follows: in Section \ref{sec:polytopes} we introduce the Coxeter polytopes $\calR$ and $\calP$, describe their combinatorial and geometric properties, and give their commensurability invariants. Theorem \ref{cor:cor1} is proved in Section \ref{sec:commensurabilty-invariants}.
In Section \ref{sec:minvolume}, we build the manifold $\calN$ commensurable with the Kerchoff-Storm polytope and prove Theorem \ref{teo:main1}. The proof of Theorem \ref{teo:main3} follows in Section \ref{sec:nonarithemtic}.

\begin{figure}\label{fig:diagrammi}
\begin{center}
\includegraphics[width=0.75\textwidth]{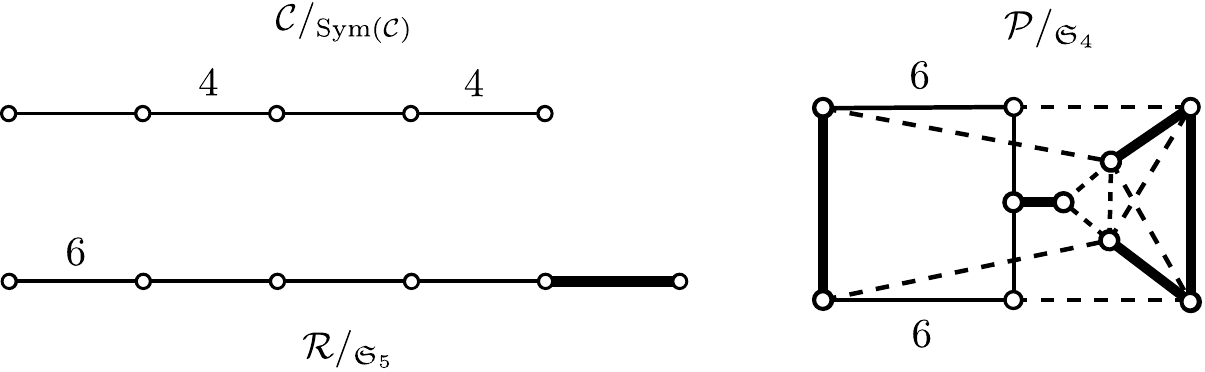}
\end{center}
\nota{The Coxeter diagrams of three polytopes obtained from $\calC$, $\calR$ and $\calP$ as quotients by some groups of symmetries.
The polytope $\calC/_{\mathrm{Sym}(\calC)}$ (top left) is the characteristic simplex of the 24-cell $\calC$; for the remaining two polytopes see Lemma \ref{lemma:pyramid} and the end of Section \ref{sec:polytope}.
The notation for Coxeter diagrams is explained in Section \ref{sec:rectified-5-cell}.}
\end{figure}

\paragraph{Acknowledgments.}
The authors are grateful to Ruth Kellerhals, Sasha Kolpakov and Bruno Martelli for some crucial observations. In particular, we thank Ruth Kellerhals and Sasha Kolpakov for suggesting a simpler proof of Lemma \ref{lemma:pyramid}.
Figures \ref{fig:facets} and \ref{fig:triangulation_figure8} come from the paper \cite{riomar} and were originally drawn by Bruno Martelli (actually, Figure \ref{fig:triangulation_figure8} is taken from his beautiful book \cite{BruBook}).
We also thank an anonymous referee for pointing out a major mistake in a previous version of the manuscript.

The first author was supported by the research fellowship ``Deformazioni di strutture iperboliche in dimensione quattro'', by the Mathematics Department of the University of Pisa.
The second author was supported by a grant from ``Scuola di Scienze di base Galileo Galilei'', and wishes to thank the Department of Mathematics of the University of Pisa for the hospitality while this work was conceived and written.

\section{Two Coxeter polytopes}\label{sec:polytopes}

In this Section, we first introduce the Coxeter polytopes $\calP$ and $\calR$, and then show that they are not commensurable. Finally, we prove Theorem \ref{cor:cor1}.

\subsection{The rectified $5$-cell}\label{sec:rectified-5-cell}
Here, we briefly introduce the ideal hyperbolic rectified 5-cell $\calR$. We refer the reader to \cite{KS2014,S} for more details.

\begin{defn}\label{def:5cell}
Consider a regular Euclidean $4$-simplex $\Delta\subset\matR^4$, normalised such that the midpoints of its edges $P_1,\ldots,P_{10}$ belong to the unit sphere $\matS^3$. Interpret now the unit ball $\mathbb B^4$ as the hyperbolic space $\matH^4$ in the Klein-Beltrami model. The \emph{rectified 5-cell} $\calR$ is the convex hull in $\matH^4$ of the ideal points $P_1,\ldots,P_{10}\in\partial_\infty\matH^4$.
(Of course, we have defined the polytope $\calR$ up to isometries of $\matH^4$.)
\end{defn}

The polytope $\calR$ has ten facets: five ideal regular octahedra and five ideal regular tetrahedra.

\begin{figure}[htbp]\label{fig:cuspR}
\begin{center}
\includegraphics[width=0.09\textwidth]{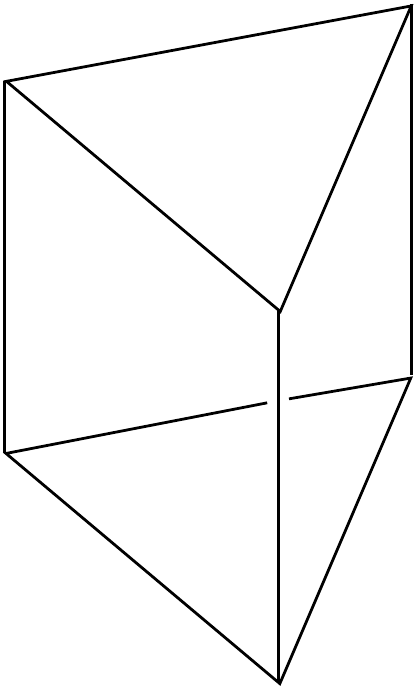}
\nota{The vertex figure of the rectified $5$-cell $\calR$ is a Euclidean right prism over an equilateral triangle.  All of its edges have equal length.}
\end{center}
\end{figure}

We note that the symmetry group of $\calR$ acts transitively on the set of its vertices. 
The vertex figure $L$ of $\calR$ is a right Euclidean prism over an equilateral triangle, with all edges of equal length. At each vertex, there are three octahedra meeting side-by-side, corresponding to the square faces, and two tetrahedra, corresponding to the triangular faces.

The dihedral angle between two octahedral facets is therefore equal to $\pi/3$, while the dihedral angle between a tetrahedral and an octahedral facet is equal to $\pi/2$. 
An important consequence of this fact is that $\calR$ is a Coxeter polytope.

As shown in \cite{KS2014}, the volume of the rectified $5$-cell is
\begin{equation}\label{eq:volume}
\mathrm{Vol}(\calR)=\frac{2\pi^2}{9}=\frac16V_{\mathrm{min}}.
\end{equation}

The symmetry group $\mathrm{Sym}(\calR)$ of the polytope $\mathcal{R}$ is clearly isomorphic to $\mathrm{Sym}(\Delta)$, which is isomorphic to the symmetric group $\mathfrak{S}_5$.
Moreover, we obtain the following one-to-one correpondences (see also \cite{KS2014}):
\begin{enumerate}
\item $\{\mbox{Vertices of }\calR\}\leftrightarrow\{\mbox{Edges of }\Delta\}$
\item $\{\mbox{Tetrahedral facets of }\calR\}\leftrightarrow\{\mbox{Vertices of }\Delta\}$
\item $\{\mbox{Octahedral facets of }\calR\}\leftrightarrow\{\mbox{Facets of }\Delta\}$
\end{enumerate}

The next Lemma will be used in Section \ref{sec:commensurabilty-invariants}.
By a \emph{hyperbolic $n$-pyramid}, we mean the convex hull in $\matH^n$ of a (possibly ideal) point
and an $(n-1)$-dimensional polytope which is not a simplex -- see \cite{tumark}.

We will adopt the following notation for (generalised) Coxeter diagrams -- see \cite{Vin}. The thick edges correspond to facets which are tangent at infinity.  The dotted edges
correspond to facets which are at positive distance.  The thin unlabeled edges correspond to facets which intersect at angle $\pi/3$. Vertices not joined by an edge correspond to facets intersecting orthogonally.  Finally, edges labeled by $\nu$ (not necessarily integer) correspond to facets intersecting at an angle of $\pi/\nu$.

\begin{lemma}\label{lemma:pyramid}
The quotient $\calR/_{\mathfrak S_5}$ of the rectified $5$-cell under the action of its symmetry group is isometric to the hyperbolic Coxeter pyramid whose Coxeter diagram is:
\begin{equation}\label{diagr:pyramid}
\includegraphics[width=0.4\textwidth]{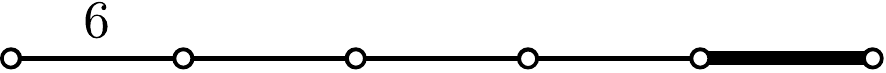}
\end{equation}
\end{lemma}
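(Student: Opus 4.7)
My plan is to identify the quotient $F := \calR/\mathfrak{S}_5$ with a concrete fundamental domain and then verify that its facets and dihedral angles match the Coxeter pyramid in \eqref{diagr:pyramid}. Using the correspondence from Section \ref{sec:rectified-5-cell}, label the vertices of $\Delta$ as $A_1,\dots,A_5$ and work in barycentric coordinates $(t_1,\dots,t_5)$. The symmetry group $\mathrm{Sym}(\calR)\cong\mathfrak{S}_5$ is the Coxeter group of type $A_4$ acting on $\matH^4$ by permutation of coordinates, so a fundamental chamber is the simplicial cone $C=\{t_1\le t_2\le t_3\le t_4\le t_5\}$, bounded by the four mirror hyperplanes $r_i=\{t_i=t_{i+1}\}$ for $i=1,2,3,4$. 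The quotient can then be identified with the convex polytope $F=C\cap \calR$, which is automatically a Coxeter polytope since both $C$ and $\calR$ are bounded by mirrors of reflection groups.

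Next, I would count the facets of $F$. The walls $r_1,\dots,r_4$ each meet $F$ in a $3$-dimensional piece, giving four mirror facets. A direct inequality check in barycentric coordinates shows that, among the ten facets of $\calR$, only two meet $C$ in a $3$-dimensional set: the tetrahedral facet $T_{A_5}$ at $A_5$ and the octahedral facet $O_{A_1}$ opposite $A_1$. (All other tetrahedral and octahedral facets intersect $C$ in at most dimension $2$, consistently with the transitivity of $\mathfrak{S}_5$ on each facet orbit.) Thus $F$ has exactly $4+2=6$ facets, matching the number of nodes in \eqref{diagr:pyramid}.

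I would then verify the pyramid structure. The unique vertex of $\calR$ in $\overline C$ is the ideal point $P_{45}=(0,0,0,\tfrac12,\tfrac12)$, and a direct check shows $P_{45}$ lies on five of the six facets of $F$ — the mirrors $r_1,r_2,r_4$ and both facet-walls $T_{A_5},O_{A_1}$ — but not on $r_3=\{t_3=t_4\}$. Hence $F$ is a hyperbolic $4$-pyramid with ideal apex $P_{45}$ and base $r_3$. As a consistency check, the link of the apex is the quotient of the Euclidean triangular prism $L$ (the vertex figure of $\calR$ at $P_{45}$) by the stabiliser $\mathfrak{S}_{\{1,2,3\}}\times \mathfrak{S}_{\{4,5\}}$ of $P_{45}$ in $\mathfrak{S}_5$; this quotient is a Euclidean Coxeter prism over a right triangle with angles $\pi/6,\pi/3,\pi/2$, with the required five facets.

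To finish, I would verify that all dihedral angles match \eqref{diagr:pyramid}. The mirror–mirror angles are those prescribed by the $A_4$ diagram: $\pi/3$ between consecutive $r_i$, $\pi/2$ otherwise. The walls $T_{A_5}$ and $O_{A_1}$ meet orthogonally, since any adjacent tetrahedral and octahedral facets of $\calR$ do. Each mixed angle between a mirror $r_i$ and a facet wall is computed from the action of the transposition $r_i$ on the facet: when $r_i$ preserves the facet setwise the two hyperplanes are orthogonal, and otherwise the angle (or, for walls not intersecting in $F$, their parallelism at infinity) can be read off from the apex link. The main obstacle is this last step — in particular distinguishing the pairs of non-incident walls that are tangent at infinity (thick edges in \eqref{diagr:pyramid}) from those at positive distance (dotted edges). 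This is a finite, explicit check in the coordinates above.
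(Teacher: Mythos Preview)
Your approach is correct but genuinely different from the paper's. The paper argues by deformation: the quotient $\Delta_\alpha/\mathfrak S_5$ of a regular $4$-simplex with dihedral angle $\alpha$ is a classical orthoscheme whose Coxeter diagram is known; for hyper-ideal $\Delta_\alpha$ one truncates and adds a node, and then lets $\alpha\to\pi/3$ so that the truncation becomes the rectification $\calR$ and the diagram specialises to \eqref{diagr:pyramid}. Your argument instead computes the fundamental domain $F=C\cap\calR$ directly in barycentric coordinates, identifies its six walls (four mirrors $r_i$, one tetrahedral wall $T_{A_5}$, one octahedral wall $O_{A_1}$), locates the ideal apex $P_{45}$, and reads off the angles. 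The paper's route is shorter and leans on the standard characteristic-simplex picture; yours is more self-contained and makes the correspondence between nodes of \eqref{diagr:pyramid} and concrete walls of $F$ explicit --- useful for the Wythoff-type remark immediately after the lemma --- at the price of the case-by-case angle check you leave to the reader.

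One small point worth tightening: the sentence ``automatically a Coxeter polytope since both $C$ and $\calR$ are bounded by mirrors of reflection groups'' is not quite self-evident, since intersections of Coxeter polytopes need not be Coxeter. The real reason here is that $\mathfrak S_5=\mathrm{Sym}(\calR)$ normalises the reflection group of $\calR$, so their product is again a reflection group and $F$ is its fundamental chamber; equivalently, each mirror $r_i$ either preserves a given facet of $\calR$ (right angle) or swaps it with an adjacent one (bisecting a dihedral angle $\pi/3$, giving $\pi/6$). Once this is said, your ``finite, explicit check'' for the remaining incidences --- in particular that $T_{A_5}$ and $r_4$ are tangent at the apex while all other non-adjacent pairs meet orthogonally --- is indeed routine and already essentially contained in your apex-link computation.
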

\begin{proof}
%
For simplicity, let us first consider a regular $4$-simplex $\Delta_{\alpha}\subset\matH^4$ with dihedral angles $\alpha$ (where $\alpha>\arccos\frac13$) and its symmetry group $\mathfrak S_5$.
It is well known that the quotient $\Delta_{\alpha}/_{\mathfrak S_5}$ is an orthoscheme with generalised Coxeter diagram

\begin{center}
\includegraphics[width=0.27\textwidth]{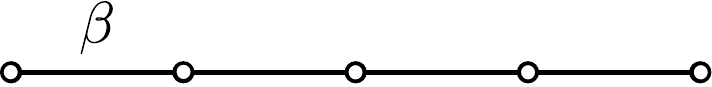}
\end{center}
where $\beta = 2\pi/\alpha$.

The simplex $\Delta_{\alpha}$ is ideal for $\alpha=\arccos\frac13$.
If instead $\alpha\in\big(\frac\pi3,\arccos\frac13\big)$, the simplex $\Delta_{\alpha}$ is hyper-ideal, and
each hyper-ideal vertex determines a dual hyperplane in $\matH^n$. We can thus consider the corresponding truncated simplex $\mathrm{tr}(\Delta_{\alpha})$ (whose symmetry group is always $\mathfrak{S}_5$).
In that case, the quotient $\mathrm{tr}(\Delta_{\alpha})/_{\mathfrak S_5}$ has generalised Coxeter diagram
\begin{center}
\includegraphics[width=0.34\textwidth]{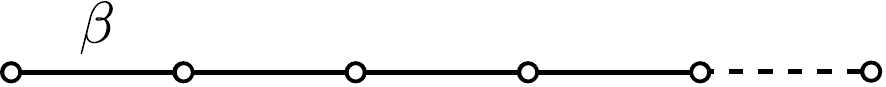}
\end{center}
where the additional node corresponds to the truncation.
In other words, the polytope $\mathrm{tr}(\Delta_{\alpha})/_{\mathfrak S_5}$ can be obtained by truncating $\Delta_{\alpha}/_{\mathfrak S_5}$ with the hyperplane dual to its hyper-ideal vertex.

As the angle $\alpha$ approaches $\pi/3$, the truncation $\mathrm{tr}(\Delta_{\alpha})$ tends to the rectification $\calR$, and the generalised Coxeter diagram of the quotient $\calR/_{\mathfrak S_5}$ is given by \eqref{diagr:pyramid}.
\end{proof}

Conversly, we see that the rectified $5$-cell $\calR$ can be obtained from the pyramid represented by \eqref{diagr:pyramid} as its orbit by the action of the group $\mathfrak S_5$ generated by reflections in the hyperplanes
associated to the sub-diagram
\begin{center}
\includegraphics[width=0.225\textwidth]{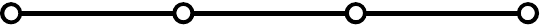}
\end{center}
This subdiagram is the link of the vertex of $\calR/_{\mathfrak S_5}$ corresponding to the barycenter of the original simplex $\Delta=\Delta_{\frac\pi3}$.
Such operation is also known as the ``Wythoff construction'' -- see \cite{coxeter}.

\subsection{The Kerckhoff-Storm polytope}\label{sec:polytope}
In this section, we describe the hyperbolic Coxeter polytope $\calP$ that tessellates the manifold $\calN$ of Theorem \ref{teo:main1}.
The reflection lattice associated to $\calP$ was recently discovered by Kerckhoff and Storm in \cite[Section 13.2]{KS}.
The geometry and combinatorics of the polytope $\calP$ are carefully described in \cite[Proposition 3.14]{riomar}.
We refer to these papers for more details -- in particular, the reader can find the proofs of all the upcoming facts in \cite{riomar}.

Let us consider the hyperboloid model of the hyperbolic 4-space, that is, we set
$$\matH^4=\big\{ v\in\matR^{1,4}\ \big|\ \langle v,v\rangle=-1,\ \langle v,e_0\rangle<0\big\},$$
where $\matR^{1,4}$ is the Minkowski space with standard basis $e_0,\ldots,e_4$ and Lorentzian product $\langle\ ,\ \rangle$ given by
$$\langle e_i,e_j\rangle=
\begin{cases}
-1\mathrm{\ if\ }i=j=0,\\
\ \ 1\mathrm{\ if\ }i=j\neq0,\\
\ \ 0\mathrm{\ if\ }i\neq j.
\end{cases}$$
Every space-like vector $v\in\matR^{1,4}$ (i.e., such that $\langle v,v\rangle>0$) determines a half-space of $\matH^4$ 
$$H_v=\big\{x\in\matH^4\big|\langle x,v\rangle\leq0\big\}.$$ 

\begin{defn}\label{def:P}
Let $\calP\subset\matH^4$ be the intersection of the 24 half-spaces determined by the 24 space-like vectors in $\matR^{1,4}$ listed in Table \ref{walls:table}.
\end{defn}

\begin{table}[htbp]
\begin{eqnarray*}
\left( \sqrt{2},1,1,1,\sqrt{5/3} \right),&
\left( \sqrt{2},1,1,1,-\sqrt{3/5} \right),&
\left(1,\sqrt{2},0,0,0\right), \\
\left( \sqrt{2},1,-1,1,-\sqrt{5/3}\right),&
\left( \sqrt{2},1,-1,1,\sqrt{3/5} \right),&
\left(1,0,\sqrt{2},0,0\right),\\
\left( \sqrt{2},1,-1,-1,\sqrt{5/3}\right),&
\left( \sqrt{2},1,-1,-1,-\sqrt{3/5} \right),&
\left(1,0,0,\sqrt{2},0\right), \\
\left( \sqrt{2},1,1,-1,-\sqrt{5/3}\right),& 
\left( \sqrt{2},1,1,-1,\sqrt{3/5}\right),& 
\left(1,0,0,-\sqrt{2},0\right),\\
\left(\sqrt{2},-1,1,-1,\sqrt{5/3}\right),& 
\left(\sqrt{2},-1,1,-1,-\sqrt{3/5}\right),& 
\left(1,0,-\sqrt{2},0,0\right), \\
\left(\sqrt{2},-1,1,1,-\sqrt{5/3}\right),& 
\left( \sqrt{2},-1,1,1,\sqrt{3/5}\right),& 
\left(1,-\sqrt{2},0,0,0\right), \\
\left(\sqrt{2},-1,-1,1,\sqrt{5/3}\right),& 
\left(\sqrt{2},-1,-1,1,-\sqrt{3/5}\right),& 
\left(\sqrt5,0,0,0,-\sqrt{6}\right), \\
\left(\sqrt{2},-1,-1,-1,-\sqrt{5/3}\right),& 
\left(\sqrt{2},-1,-1,-1,\sqrt{3/5}\right),& 
\left(\sqrt5,0,0,0,\sqrt{6}\right).
\end{eqnarray*}
\caption{The half-spaces that define $\calP$ are determined by these space-like vectors.}\label{walls:table}
\end{table}

The set $\calP$ is a hyperbolic Coxeter 4-polytope, and has 24 facets (i.e. 3-faces), 100 ridges (i.e. 2-faces), 120 edges, and 44 vertices, of which 20 are ideal.
The combinatorics and geometry of $\calP$ can be recovered from Figure \ref{fig:facets}.

\paragraph{The facets.}
The symmetry group of $\calP$ acts transitively on each of the following sets of facets:
\begin{enumerate}
\item The \emph{positive} (abbreviated P) \emph{facets} are the eight facets determined by the vectors of Table \ref{walls:table} whose last coordinate is $\pm\sqrt{5/3}$.
Four of them lie in the half-space $H_{e_4}$ (resp. $H_{-e_4}$), and are called \emph{upper} (resp. \emph{lower}) \emph{positive facets}\footnote{In \cite{KS,riomar}, these are called the ``odd (resp. even) positive walls''.}.
\item The \emph{negative} (abbreviated N) \emph{facets} are the eight facets determined by the vectors of Table \ref{walls:table} whose last coordinate is $\pm\sqrt{3/5}$.
Four of them lie in the half-space $H_{e_4}$ (resp. $H_{-e_4}$), and are called \emph{upper} (resp. \emph{lower}) \emph{negative facets}\footnote{In \cite{KS,riomar}, these are called the ``even (resp. odd) negative walls''.}.
\item The \emph{equatorial} (abbreviated E) \emph{facets} are the six facets determined by the vectors of Table \ref{walls:table} whose last coordinate is $0$.
Each such facet intersects the equatorial hyperplane $\partial H_{e_4}=\{x_4=0\}\subset\matH^4$ in an ideal quadrilateral.
\item The \emph{tetrahedral} (abbreviated T) \emph{facets} are the two facets determined by the vectors of Table \ref{walls:table} whose last coordinate is $\pm\sqrt{6}$.
These are regular ideal tetrahedra, and are the only facets whose intersection with the equatorial hyperplane $\partial H_{e_4}=\{x_4=0\}\subset\matH^4$ is empty.
The facet given by $\sqrt5e_0-\sqrt6e_4$ (resp. $\sqrt5e_0+\sqrt6e_4$) is the \emph{upper} (resp. \emph{lower}) \emph{tetrahedral facet}\footnote{The eight facets of items 3 and 4 are called the ``letter walls'' in \cite{KS,riomar}.}.
\end{enumerate}

\begin{figure}
\begin{center}
\includegraphics[width=\textwidth]{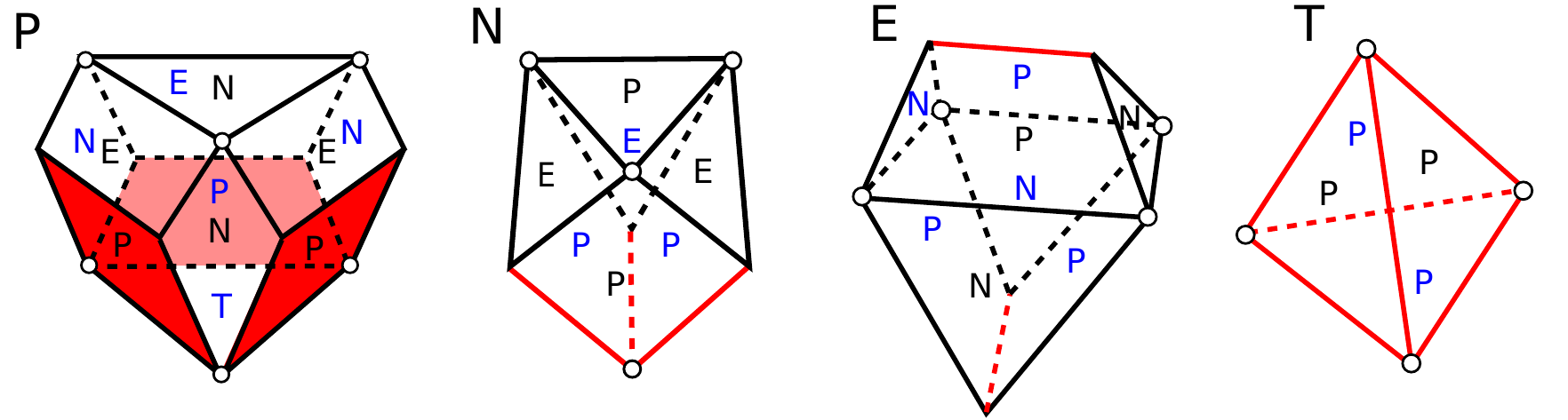}
\nota{The geometry and combinatorics of the positive, negative, equatorial and tetrahedral facets of the polytope $\calP$ (respectively labeled P, N, E and T). Ideal vertices are represented by white dots. The labels on each $2$-dimensional face show adjacencies between facets of various types. Black labels apply to faces in the foreground, and blue labels to faces in the background. In the polytope $\calP$, the white and red 2-faces have dihedral angles $\pi/2$ and $\pi/3$, respectively. Similarly, in each facet, the black edges are right-angled, while the red edges have dihedral angle $\pi/3$.}\label{fig:facets}
\end{center}
\end{figure}

We now describe some of the faces of $\calP$ of lower dimension. 
Note that, being a Coxeter polytope, $\calP$ is \emph{simple} -- meaning that every face of codimension $k$ (except the ideal vertices) is the intersection of exactly $k$ facets of $\calP$ -- see \cite{Vin}.

The faces of $\calP$ will be often distinguished by their type (rather than by isometry class), where by the \emph{type} of a face $\calF$ we mean the isometry classes of the facets of $\calP$ whose intersection is $\calF$.
For instance, an edge of $\calP$ is of type PNE if it is the intersection of a positive, a negative and an equatorial facet. However there are two distinct isometry classes of PNE edges (some edges have two ideal vertices, some others have only one), as can be seen from Figure \ref{fig:facets}. 
(For the facets of $\calP$, instead, the type coincides with the isometry class.)

\paragraph{The ridges.}
The polytope $\calP$ has dihedral angles $\pi/2$ and $\pi/3$.
The right-angled ridges are of type PN, PE, PT and NE, while the ridges with dihedral angle $\pi/3$ are of type PP, as shown in Figure \ref{fig:facets}.
We note that every ridge has some ideal vertices.

\paragraph{The vertices.}

\begin{figure}
\begin{center}
\definecolor{qqqqff}{rgb}{0,0,1}
\definecolor{ffqqqq}{rgb}{1,0,0}
\begin{tikzpicture}[line cap=round,line join=round,>=triangle 45,x=.8cm,y=.8cm]
\clip(-4.32,-2.77) rectangle (10.41,4.28);
\draw [line width=1.2pt] (-2,1)-- (-0.5,1);
\draw [line width=1.2pt] (-0.5,1)-- (-0.5,-1);
\draw [line width=1.2pt] (-0.5,-1)-- (-2,-1);
\draw [line width=1.2pt] (-2,1)-- (-4,2);
\draw [line width=1.2pt] (-4,2)-- (-2.5,2);
\draw [line width=1.2pt] (-2.5,2)-- (-0.5,1);
\draw [line width=1.2pt] (-2,-1)-- (-4,0);
\draw [line width=1.2pt] (-4,0)-- (-4,2);
\draw [line width=1.2pt,dash pattern=on 3pt off 3pt] (-4,0)-- (-2.5,0);
\draw [line width=1.2pt,dash pattern=on 3pt off 3pt] (-2.5,0)-- (-0.5,-1);
\draw [line width=1.2pt,dash pattern=on 3pt off 3pt] (-2.5,0)-- (-2.5,2);
\draw [line width=1.2pt] (-2,1)-- (-2,-1);
\draw [line width=1.2pt] (4,0.4)-- (3,-1);
\draw [line width=1.2pt] (3,-1)-- (5,-1);
\draw [line width=1.2pt] (5,-1)-- (4,0.4);
\draw [line width=1.2pt,color=ffqqqq] (4,0.4)-- (4,3.4);
\draw [line width=1.2pt] (4,3.4)-- (5,2);
\draw [line width=1.2pt,color=ffqqqq] (5,2)-- (5,-1);
\draw [line width=1.2pt] (4,3.4)-- (3,2);
\draw [line width=1.2pt,color=ffqqqq] (3,2)-- (3,-1);
\draw [line width=1.2pt,dash pattern=on 3pt off 3pt] (3,2)-- (5,2);
\draw (-1.5,0.4) node[anchor=north west] {P};
\draw [color=qqqqff](-3.58,1.53) node[anchor=north west] {P};
\draw (-3.25,0.86) node[anchor=north west] {N};
\draw [color=qqqqff](-1.9,1.1) node[anchor=north west] {N};
\draw (-2.52,1.84) node[anchor=north west] {E};
\draw [color=qqqqff](-2.81,-0.1) node[anchor=north west] {E};
\draw (4.17,1.5) node[anchor=north west] {P};
\draw (3.26,1.52) node[anchor=north west] {P};
\draw [color=qqqqff](4.0,1.0) node[anchor=north west] {P};
\draw (3.7,-0.21) node[anchor=north west] {T};
\draw [color=qqqqff](3.45,2.92) node[anchor=north west] {N};
\draw (7.84,1.1) node[anchor=north west] {P};
\draw [color=qqqqff](8.26,1.55) node[anchor=north west] {P};
\draw [color=qqqqff](8.01,0.23) node[anchor=north west] {N};
\draw (9,1) node[anchor=north west] {E};
\draw [shift={(0.88,1.52)},line width=1.2pt]  plot[domain=-0.34:0.06,variable=\t]({1*7.95*cos(\t r)+0*7.95*sin(\t r)},{0*7.95*cos(\t r)+1*7.95*sin(\t r)});
\draw [shift={(11.35,-1.94)},line width=1.2pt,color=ffqqqq]  plot[domain=2.14:2.63,variable=\t]({1*4.71*cos(\t r)+0*4.71*sin(\t r)},{0*4.71*cos(\t r)+1*4.71*sin(\t r)});
\draw [shift={(11.46,2.15)},line width=1.2pt]  plot[domain=3.55:3.96,variable=\t]({1*4.54*cos(\t r)+0*4.54*sin(\t r)},{0*4.54*cos(\t r)+1*4.54*sin(\t r)});
\draw [shift={(6.32,-0.85)},line width=1.2pt]  plot[domain=0.23:0.86,variable=\t]({1*3.78*cos(\t r)+0*3.78*sin(\t r)},{0*3.78*cos(\t r)+1*3.78*sin(\t r)});
\draw [shift={(7.52,1.81)},line width=1.2pt]  plot[domain=4.99:5.65,variable=\t]({1*3.07*cos(\t r)+0*3.07*sin(\t r)},{0*3.07*cos(\t r)+1*3.07*sin(\t r)});
\draw [shift={(8.17,-3.27)},line width=1.2pt,dash pattern=on 3pt off 3pt]  plot[domain=1.06:1.81,variable=\t]({1*3.74*cos(\t r)+0*3.74*sin(\t r)},{0*3.74*cos(\t r)+1*3.74*sin(\t r)});
\draw (-2.03,-1.63) node[anchor=north west] {(1)};
\draw (3.64,-1.68) node[anchor=north west] {(2)};
\draw (8.48,-1.7) node[anchor=north west] {(3)};
\end{tikzpicture}
\end{center}
\nota{The links of the vertices of the polytope $\calP$. Black edges are right-angled, while red edges have dihedral angle $\pi/3$. The faces of these three polyhedra are labeled (front faces in black, back faces in blue) with a symbol -- P for positive, N for negative, E for equatorial, T for tetrahedral -- denoting the isometry class of the corresponding facet of $\calP$. The Euclidean parallelepiped (1) is the link of an equatorial ideal vertex, the Euclidean prism (2) is the link of an upper or lower ideal vertex, the spherical tetrahedron (3) is the link of a finite vertex.}\label{fig:links}
\end{figure}

The link $L_v$ of each vertex $v$ of $\calP$ is a three-dimensional polyhedron, which is Euclidean (and defined up to rescaling) if $v$ is ideal, or spherical if $v$ is finite.
Each face of $L_v$ is the link of $v$, seen as vertex of an appropriate facet $\calF$ of $\calP$. 
We label each face of $L_v$ by the type of the corresponding $\calF$, as in Figure \ref{fig:links}.

The symmetry group of $\calP$ acts transitively on each of the following sets of vertices:
\begin{enumerate}
\item 12 \emph{equatorial ideal vertices}, lying in $\partial_\infty\{x_4=0\}\subset\partial_\infty\matH^4$, and corresponding to the ideal vertices of the equatorial faces. These are of type EEPPNN, with link a Euclidean rectangular parallelepiped, depicted in Figure \ref{fig:links}-(1);
\item 8 ideal vertices, of which four lie in $\partial_\infty H_{e_4}$ (resp. $\partial_\infty H_{-e_4}$) called \emph{upper} (resp. \emph{lower}) \emph{ideal vertices} corresponding to the ideal vertices of the tetrahedral facets. These are of type TNPPP, with link a Euclidean right prism over an equilateral
triangle, depicted in Figure \ref{fig:links}-(2);
\item 24 finite vertices. These are of type PPNE, with link the spherical tetrahedron depicted in Figure \ref{fig:links}-(3).
\end{enumerate}

\paragraph{Volume.}
As proved in \cite[Proposition 3.21]{riomar}, the volume of the polytope $\calP$ is
\begin{equation}\label{eq:volP} \mathrm{Vol}(\calP)=\frac{4\pi^2}{3}=V_{\mathrm{min}},\end{equation}
which coincides with the minimal volume of a hyperbolic $4$-manifold.

\paragraph{Symmetries.}
Next, we explicitly describe the symmetry group $\mathrm{Sym}(\calP)$ of the polytope $\calP$, its subgroup of orientation-preserving symmetries $\mathrm{Sym}^+(\calP)$, and their action on $\calP$.
An important symmetry of $\calP$ is the \emph{antipodal map}
$$a\colon(x_0,x_1,x_2,x_3,x_4)\mapsto(x_0,-x_1,-x_2,-x_3,-x_4).$$
It is easy to check that $a$ is orientation-preserving and exchanges the two half-spaces $H_{e_4}$ and $H_{-e_4}$. In particular, $a$ exchanges the two tetrahedral facets of $\calP$.

\begin{prop}\label{prop:symmetries}
There is a group isomorphism
$$\mathrm{Sym}(\calP)\cong\matZ/_{2\matZ}\times\mathfrak S_4$$
that restricts to an isomorphism
$$\mathrm{Sym}^+(\calP)\cong\matZ/_{2\matZ}\times\mathfrak A_4,$$
where $\mathfrak S_4$ (resp. $\mathfrak A_4$) is the symmetric (resp. alternating) group on the set of the upper positive facets of the polytope $\calP$.
The center $\matZ/_{2\matZ}$ is generated by the antipodal map $a$.
\end{prop}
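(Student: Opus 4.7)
I would prove this by first identifying the index-$2$ subgroup $H\leq\mathrm{Sym}(\calP)$ that preserves the upper/lower decomposition of the facets, showing $H\cong\mathfrak S_4$ via its action on the four upper positive facets, and then observing that $a$ lies outside $H$ and centralises it. This gives the direct product structure, and the orientations follow from determinants.

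A direct inspection of Table \ref{walls:table} shows that $a$ permutes the $24$ defining vectors, and by construction $a$ swaps the upper and lower halves of $\calP$, so $\mathrm{Sym}(\calP)=H\sqcup aH$ with $a\notin H$. Any $\sigma\in H$ preserves the equatorial hyperplane $\{x_4=0\}$ and the forward time-like direction, hence its linear extension fixes both $e_0$ and $e_4$ and acts on the spatial block $\langle e_1,e_2,e_3\rangle$ as an element $\tau\in\Or(3)$. The spatial projections of the four upper positive normals are the vertices $(1,1,1),(1,-1,-1),(-1,1,-1),(-1,-1,1)$ of a regular tetrahedron $T\subset\matR^3$, and $\tau$ must preserve $T$, yielding a homomorphism $\rho\colon H\to\mathrm{Sym}(T)\cong\mathfrak S_4$. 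This map is injective: a kernel element $\sigma$ fixes the four upper positive normals, hence preserves the $1$-dimensional Lorentz-orthogonal complement of their span, which a short computation identifies with the line through the lower tetrahedral normal $w=(\sqrt5,0,0,0,\sqrt6)$; since $-w$ is not among the $24$ defining vectors, $\sigma(w)=w$, so $\sigma$ fixes five linearly independent vectors of $\matR^{1,4}$ and must equal $\mathrm{id}$. Surjectivity follows by extending each $\tau\in\Or(3)$ stabilising $T$ to $\hat\tau=\mathrm{id}_1\oplus\tau\oplus\mathrm{id}_1\in\Or(1,4)$ and checking that $\hat\tau$ permutes all $24$ defining vectors: $\tau$ preserves the cube $\{\pm1\}^3$ that houses the spatial parts of the $16$ positive and negative normals, the octahedron $\{\pm\sqrt2\,e_i\}$ of the $6$ equatorial ones, and the origin for the $2$ tetrahedral ones. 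Hence $\rho$ is an isomorphism.

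Finally, writing $a=\mathrm{id}_1\oplus(-\mathrm{id}_3)\oplus(-\mathrm{id}_1)$ in the same block decomposition shows at once that $a$ commutes with every $\hat\tau$, so $\mathrm{Sym}(\calP)\cong\langle a\rangle\times H\cong\matZ/_{2\matZ}\times\mathfrak S_4$. Taking determinants, $\det(a)=+1$ and $\det(\hat\tau)=\det(\tau)$, so an element is orientation-preserving precisely when its $\mathfrak S_4$-component lies in $\mathfrak A_4$ (the rotation group of $T$); this gives $\mathrm{Sym}^+(\calP)\cong\matZ/_{2\matZ}\times\mathfrak A_4$. The main obstacle is the surjectivity step: producing, for each tetrahedral permutation, an actual symmetry of $\calP$ (rather than merely a linear map preserving the four upper positive hyperplanes) requires tracking the action on the remaining twenty defining vectors. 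The key observation that unlocks this is that the tetrahedral group inside $\Or(3)$ already stabilises both the ambient cube and its dual octahedron, which are exactly the structures encoding the other facet normals.
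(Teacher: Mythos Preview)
Your approach is sound and reaches the conclusion by a genuinely different route from the paper. The paper imports the generating set $\mathrm{Sym}(\calP)=\langle r,l,m,n\rangle$ from \cite{KS,riomar} as a black box, identifies the subgroup $\langle l,m,n\rangle$ fixing the half-space $H_{e_4}$ with $\mathfrak S_4$ via its action on the upper positive facets, and then rewrites $r$ as $a$ times a word in $l,m,n$; the splitting $\matZ/_{2\matZ}\times\mathfrak S_4$ then comes from the centerlessness of $\mathfrak S_4$. Your argument is more self-contained: you \emph{construct} all of $H$ by lifting the tetrahedral group from $\Or(3)$ and verifying directly that each lift permutes the $24$ defining vectors, and your injectivity step shows there are no further symmetries, so you never need the externally computed generating set. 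The trade-off is that the paper's proof is shorter once one accepts the citation, while yours stands on its own.

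Two points need tightening. First, the inference ``$\sigma\in H$ preserves the forward time-like direction, hence fixes $e_0$'' is invalid as written: every element of $\Or^+(1,4)$ preserves the forward cone, yet most move $e_0$. The correct justification is that $\sigma\in H$ fixes each of the two tetrahedral facets setwise (there being only one upper and one lower), hence---since a symmetry must permute the $24$ defining vectors themselves, not just the hyperplanes---fixes both tetrahedral normals $(\sqrt5,0,0,0,\pm\sqrt6)$, and therefore fixes their span $\langle e_0,e_4\rangle$ pointwise. Second, the decomposition $\mathrm{Sym}(\calP)=H\sqcup aH$ presupposes $[\mathrm{Sym}(\calP):H]\le 2$, which you assert without argument; it follows cleanly once you note that $H$ is precisely the kernel of the $\mathrm{Sym}(\calP)$-action on the two-element set of tetrahedral facets. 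With these two patches your proof is complete.
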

\begin{proof}
The group $\mathrm{Sym}(\calP)$ is explicitly computed in \cite[Section 3.2]{riomar} and \cite[Section 4]{KS} as
$$\mathrm{Sym}(\calP)=\langle r,l,m,n\rangle,$$
where
$$r\colon(x_0,x_1,x_2,x_3,x_4)\mapsto(x_0,x_1,x_2,-x_3,-x_4),$$
$$l\colon(x_0,x_1,x_2,x_3,x_4)\mapsto(x_0,x_2,x_1,x_3,x_4),$$
$$m\colon(x_0,x_1,x_2,x_3,x_4)\mapsto(x_0,x_1,x_3,x_2,x_4),$$
$$n\colon(x_0,x_1,x_2,x_3,x_4)\mapsto(x_0,x_1,-x_3,-x_2,x_4).$$

It is easy to check that the group $\langle l,m,n\rangle$ consists precisely of the symmetries of $\calP$ that preserve the half-space $H_{e_4}$.
Moreover, it acts faithfully on the set of the upper positive facets of $\calP$ as its full permutation group (see also the proof of \cite[Lemma 4.15]{riomar}), and thus we have a natural isomorphism $$\langle l,m,n\rangle\cong\mathfrak S_4.$$
Since $r=a\circ m\circ l\circ m\circ n\circ l\circ m$, we also have
$$\mathrm{Sym}(\calP)=\langle a,l,m,n\rangle.$$

Now, the antipodal map $a$ has order two and is in the center of $\mathrm{Sym}(\calP)$, while $\mathfrak S_4$ is centerless.
Thus, the short exact sequence
$$1\to\mathfrak S_4\to\mathrm{Sym}(\calP)\to\matZ/_{2\matZ}\to1$$
(where the third map sends an $s\in\mathrm{Sym}(\calP)$ to $0$ if $s$ preserves $H_{e_4}$, and to $1$ otherwise) splits, and furnishes an isomorphism $\mathrm{Sym}(\calP)\cong\matZ/_{2\matZ}\times\mathfrak S_4$.

Finally, since the upper tetrahedral facet $\calT$ is a regular tetrahedron whose set of faces is
$$\{\calT\cap\calX\ |\ \calX\mathrm{\ an\ upper\ positive\ facet\ of\ }\calP\},$$
the group $\langle l,m,n\rangle$ acts on $\calT$ as its symmetry group.
The subgroup $\mathfrak A_4$ acts by orientation-preserving isometries of $\calT$ and $a$ is orientation-preserving. Thus, the subgroup $\matZ/_{2\matZ}\times\mathfrak A_4$ acts on $\calP$ as its group of orientation-preserving isometries, and the proof is completed.
\end{proof}

From now on, we will naturally write the elements of $\mathrm{Sym}(\calP)$ as elements of $\matZ/_{2\matZ}\times\mathfrak S_4$.

The quotient $\calP/_{\mathfrak S_4}$ is isometric the polytope represented by the Coxeter diagram in Figure \ref{fig:diagrammi}-right -- see \cite[Figure 33]{KS} and \cite[Figure 7]{riomar}.

\subsection{Commensurability}\label{sec:commensurabilty-invariants}
A (complete) \emph{hyperbolic orbifold} is a quotient $\matH^n/_\Gamma$, for a discrete group $\Gamma<\mathrm{Isom}(\matH^n)$.
Two orbifolds $\matH^n/_\Gamma$ and $\matH^n/_{\Gamma'}$ are \emph{commensurable} if $\Gamma\cap g\Gamma'g^{-1}$ has finite index in both $\Gamma$ and $g\Gamma'g^{-1}$, for some $g\in\mathrm{Isom}(\matH^n)$.

A convex polytope $P\subset\matH^n$ is called a \emph{Coxeter polytope} if all its dihedral angles are integral submultiples of $\pi$.
In this case, we interpret $P$ as a hyperbolic orbifold $\matH^n/_\Gamma$, where $\Gamma$ is the hyperbolic Coxeter group generated by reflections through the supporing hyperplanes of $P$.

We refer the reader to \cite{Margulis} for the notion of arithmetic lattices.
We will not describe the regular ideal $24$-cell $\calC$, as it is not necessary for our purposes. The interested reader can find more reference elsewhere in the literature, for instance in \cite{Sasha}.

\begin{prop}\label{prop:PRC-not-commensurable}
The rectified $5$-cell $\calR$, the polytope $\calP$ and the ideal regular $24$-cell $\calC$ are pairwise non-commensurable arithmetic Coxeter polytopes.
\end{prop}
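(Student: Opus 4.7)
The proof plan is based on two ingredients. First, I would verify arithmeticity of the three reflection groups: since every dihedral angle of $\calC$, $\calR$ and $\calP$ is either $\pi/2$ or $\pi/3$, the off-diagonal entries of their Gram matrices (of unit normals to the facets) take values in $\{0,-1/2\}\subset\matQ$. Vinberg's arithmeticity criterion therefore applies over the base field $\matQ$, and — since each of the three polytopes has ideal vertices and since any indefinite quadratic form in five variables over $\matQ$ is isotropic by Meyer's theorem — all three reflection groups are non-cocompact arithmetic subgroups of $\mathrm{Isom}(\matH^4)$ defined over $\matQ$.

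Second, by the classical theory of arithmetic subgroups of orthogonal groups, invoked here through \cite{Mac}, two non-cocompact arithmetic subgroups of $\mathrm{Isom}(\matH^4)$ defined over $\matQ$ are commensurable if and only if the associated quadratic forms of signature $(4,1)$ over $\matQ$ are similar, i.e., agree up to a rational scalar and a $\matQ$-linear change of coordinates. I would then identify the form associated to each polytope: for the $24$-cell, it is the standard form $q_\calC=\langle -1,1,1,1,1\rangle$; for the rectified $5$-cell, a form $q_\calR$ extracted from any five linearly independent normals of $\calR$ (as in Definition \ref{def:5cell}); and for the Kerckhoff-Storm polytope, a form $q_\calP$ extracted from any five linearly independent vectors in Table \ref{walls:table} after rescaling each to unit norm.

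To conclude, I would show that $q_\calC$, $q_\calR$ and $q_\calP$ represent three distinct $\matQ$-similarity classes. Since the dimension $n+1=5$ is odd, the discriminant alone is not a similarity invariant (scaling by $\lambda$ multiplies it by $\lambda^5\equiv\lambda\pmod{(\matQ^*)^2}$), so the argument must rely on the Hasse-Witt invariants $c_p(q)$ at each finite prime $p$, taking into account their specific transformation law under $q\mapsto\lambda q$. The main obstacle is this final case analysis: one needs to check, prime by prime (the relevant primes being contained in $\{2,3,5\}$), that no single $\lambda\in\matQ^*/(\matQ^*)^2$ simultaneously aligns the local invariants of any two of the three forms. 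This is a finite check, and already the coordinates visible in Table \ref{walls:table} (involving $\sqrt{2}$, $\sqrt{5/3}$, $\sqrt{3/5}$, $\sqrt{5}$, $\sqrt{6}$) suggest strongly that $q_\calP$ is locally distinguished from the other two forms at the primes $3$ and $5$, while $q_\calR$ is distinguished from $q_\calC$ at the prime $5$.
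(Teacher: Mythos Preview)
Your plan is essentially the paper's approach: establish arithmeticity, then separate the commensurability classes via the quadratic-form invariants of Maclachlan~\cite{Mac}. The paper short-circuits both steps by citation: arithmeticity is taken from \cite{RT}, \cite{KS} and \cite{GJK}, and the similarity invariants (the ramification sets of the associated quaternion algebras, which in odd dimension $n+1=5$ encode exactly the information you are trying to extract from Hasse symbols modulo scaling) are already tabulated as $\varnothing$ for $\calC$, $\{2,5\}$ for $\calP$ \cite[Prop.~4.25]{riomar}, and $\{3,\infty\}$ for $\calR/_{\mathfrak S_5}$ \cite[Table~4]{GJK}.

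Two concrete problems with your write-up, though. First, the arithmeticity step is wrong as stated: the full Gram matrices are \emph{not} supported on $\{0,-\tfrac12\}$, because many pairs of facets are non-adjacent (tangent at infinity or ultraparallel). For example, the unit normals $(1,\sqrt2,0,0,0)$ and $(\sqrt5,0,0,0,-\sqrt6)$ from Table~\ref{walls:table} have Lorentzian product $-\sqrt5$. Vinberg's criterion requires the cyclic products of $2G$ to be rational integers, and that does not fall out of the dihedral angles alone; for $\calP$ it is a genuine (if routine) verification. Second, you never actually carry out the separating computation, and your heuristic about primes is off: $q_\calR$ is distinguished from $q_\calC$ at the places $3$ and $\infty$, not at $5$. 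Working directly with raw Hasse symbols and their scaling law is possible but clumsy here; passing to the even Clifford algebra as in \cite{Mac} gives the similarity-invariant ramification set directly and yields the three visibly distinct sets above.
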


\begin{proof}
The arithmeticity of the $24$-cell is proved in \cite[Section 4]{RT}, while arithmeticity of the polytope $\calP$ is observed in \cite{KS} (it can be easily verified from the Coxeter diagram in Figure \ref{fig:diagrammi}-right by applying Vinberg's algorithm \cite{Vin} as explained in \cite[Section 13.3]{KS}).
The rectified $5$-cell $\calR$ is clearly commensurable with the pyramid $\calR/_{\mathfrak{S}_5}$ of Lemma \ref{lemma:pyramid}, which is shown to be arithmetic in \cite{GJK}.

By the work of Maclachlan \cite{Mac}, commensurability classes of arithmetic Coxeter polytopes are distinguished by the ramification sets of some naturally associated quaternion algebras. As shown in \cite[Proposition 4.25]{riomar}, the ramification set of the $24$-cell is trivial, while the ramification set of the polytope $\calP$ is the set $\{2,5\}$. Finally, the ramification set for the pyramid $\calR/_{\mathfrak{S}_5}$ is computed in \cite[Table 4, first line]{GJK}, and is given by the set $\{3,\infty\}$.

This proves that these three Coxeter polytopes are pairwise non-commensurable.
\end{proof}

\begin{rem}
Proposition \ref{prop:PRC-not-commensurable} allows us to correct the inexact claim in the proof of \cite[Proposition 4.4]{KS2014} that the rectified $5$-cell $\calR$ and the $24$-cell $\calC$ are commensurable. However, since $\calR$ is indeed arithmetic, the statement of that Proposition remains true.
\end{rem}

We are finally ready to prove Theorem \ref{cor:cor1}. The proof follows essentially by combining Proposition \ref{prop:PRC-not-commensurable} with previously known results, and the details are given below.

\begin{proof}[Proof of Theorem \ref{cor:cor1}]
All the manifolds in the Ratcliffe-Tschantz census \cite{RT} are by construction commensurable with the $24$-cell $\calC$ and of volume $\mathrm{Vol}(\calC)=V_{\mathrm{min}}$.

In order to build a minimal-volume manifold commensurable with the rectified $5$-cell $\calR$, consider the manifold $\calZ$ described in \cite[Remark 4.4]{S}.
Since this manifold is tessellated by six copies of the rectified $5$-cell $\calR$, we have $\mathrm{Vol}(\calZ)=6\cdot \mathrm{Vol}(\calR)=V_{\mathrm{min}}$. Now, the manifold $\calZ$ has totally geodesic boundary isometric to the complement of the figure-eight knot. It is sufficient to ``kill'' the boundary component by taking its quotient under the map which produces the Gieseking manifold, to obtain an orientable, minimal-volume hyperbolic $4$-manifold $\cal{Z}'$ with empty boundary. As can be seen immediately from the construction, the manifold $\calZ'$ is built by gluing copies of $\calR$ via symmetries of $\calR$, so Lemma \ref{lemma:commensurability} applies showing that $\cal{Z}'$ is in fact commensurable with $\calR$. 

By Proposition \ref{prop:PRC-not-commensurable}, the polytopes $\calR$ and $\calC$ are non-commensurable, and we conclude the same for the manifold $\cal{Z}'$ and any manifold from the Ratcliffe-Tschantz census. 
\end{proof}

\section{A manifold with twice the minimal volume}\label{sec:minvolume}

In this Section, we first prove the following:
\begin{teo}\label{teo:main1}
The commensurability class of the Kerckhoff-Storm polytope $\calP$ contains a non-orientable manifold $\calN$ with $\chi(\calN)=2$.  
\end{teo}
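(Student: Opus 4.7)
The strategy is to realize $\calN$ by gluing two isometric copies $\calP_1, \calP_2$ of the Kerckhoff--Storm polytope via an explicit face-pairing, in the spirit of the construction in \cite{riomar}. Since $\mathrm{Vol}(\calP) = V_{\mathrm{min}}$, the Gauss--Bonnet formula will automatically give $\chi(\calN) = 2$ once we produce a hyperbolic manifold tessellated by two copies of $\calP$. My first step is to define a fixed-point-free involution on the 48 facets of $\calP_1 \sqcup \calP_2$ that pairs each facet of $\calP_1$ with a facet of $\calP_2$ of the same combinatorial type (P with P, N with N, E with E, T with T), each pairing being realized by the unique hyperbolic isometry sending one facet onto the other. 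The symmetry group $\mathrm{Sym}(\calP) \cong \matZ/_{2\matZ} \times \mathfrak{S}_4$ from Proposition \ref{prop:symmetries} provides the natural pool of candidates, for instance by composing the antipodal map $a$ with elements of $\mathfrak{S}_4$ that permute facets within each half-space $H_{\pm e_4}$.

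The second step is to verify the Poincar\'e polyhedron conditions guaranteeing that the quotient carries a hyperbolic manifold structure. Every ridge cycle must close with total dihedral angle $2\pi$: this forces length-$4$ cycles on the right-angled ridges of type PN, PE, PT, NE, and length-$6$ cycles on the PP ridges of angle $\pi/3$. Likewise, every vertex link, obtained by gluing copies of the three polyhedra in Figure \ref{fig:links} along the induced identifications, must close up to a round $S^3$ at every finite vertex class and to a closed flat $3$-manifold at every ideal vertex class (producing the cusps of $\calN$). Because $\mathrm{Sym}(\calP)$ acts transitively on each orbit of facets, ridges and vertices described in Section \ref{sec:polytope}, this verification reduces to a finite table-check on one representative per orbit, parallel to the one carried out in \cite{riomar}.

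Finally, I would show that the resulting manifold is non-orientable, for instance by exhibiting a closed loop in $\calN$ that crosses the identifications an odd number of times through orientation-preserving isometries of $\matH^4$; equivalently, one constructs the orientable double cover and checks that it is a genuine two-sheeted cover distinct from the manifold of \cite{riomar}. Commensurability of $\calN$ with $\calP$ is immediate since $\calN$ is obtained from copies of $\calP$ through isometries lying in the commensurator of the Coxeter reflection group of $\calP$, exactly in the spirit of the commensurability lemma invoked in the proof of Theorem \ref{cor:cor1}. The main obstacle is choosing the face-pairing so that all $100$ ridge cycles and all $44$ vertex links close correctly \emph{and} the result is non-orientable: an arbitrary assignment will generically violate some cycle condition, so the pairing must be designed carefully, differing from the one in \cite{riomar} in a controlled way that flips orientability without destroying the Poincar\'e conditions.
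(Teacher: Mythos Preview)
Your proposal is a reasonable outline, but it stops precisely where the real work begins: you never specify the face-pairing, and you explicitly acknowledge this as ``the main obstacle.'' The content of Theorem \ref{teo:main1} \emph{is} the existence of such a pairing; without it you have only restated the Poincar\'e-polyhedron checklist. Two further remarks on the outline itself: the isometry sending one facet onto another is far from unique (each facet has nontrivial self-isometries), so ``the unique hyperbolic isometry'' is not well-defined; and the scheme ``pair every facet of $\calP_1$ with one of $\calP_2$'' forces every ridge cycle to have even length, which is consistent with what you need but gives you no mechanism for actually closing the PP cycles at length $6$ or the right-angled ones at length $4$.

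The paper takes a structurally different route. Rather than pairing facets across two copies, it first builds a manifold $\calM$ with totally geodesic boundary from a \emph{single} copy of $\calP$: the positive facets are glued among themselves following the face-pairings of the standard two-tetrahedron ideal triangulation of the figure-eight knot complement (so the $\pi/3$ cycles close for free, because that triangulation has edge valence $6$); the two tetrahedral facets are glued via the Gieseking involution; the negative facets are glued via a second involution coming from an isometry of the figure-eight complement; and the six equatorial facets are left \emph{unpaired} and become $\partial\calM$. The relations \eqref{eq:positive-negative} and \eqref{eq:positive-letters} between these maps, inherited from the isometry group of the figure-eight complement, are exactly what forces the remaining ridge cycles to close. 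One then sets $\calN=\calD(\calM)$. Non-orientability is immediate because all the gluing maps used are restrictions of orientation-preserving symmetries of $\calP$, and commensurability follows from Lemma \ref{lemma:commensurability}. The point is that the figure-eight/Gieseking pattern is not decoration: it is the device that makes the cycle and link conditions verifiable without a brute-force search, and your proposal has no analogue of it.
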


In order to build such a manifold $\calN$, we will first construct a hyperbolic $4$-manifold $\calM$ with totally geodesic boundary and such that $\chi(\calM)=1$, by gluing in pairs through isometries the facets of the Kerckhoff-Storm polytope $\calP$ introduced in Section \ref{sec:polytope}. The manifold $\calN$ will then be obtained by mirroring the manifold $\calM$ in its boundary. 

Finally, in Section \ref{sec:nonarithemtic}, we will prove Theorem \ref{teo:main3} by ``interbreeding''.


\subsection{Defining the manifold $\calM$}

Before defining the manifold $\calM$, we first define the maps to be used as face-pairings of the polytope $\calP$.

\paragraph{The figure-eight knot pattern.}
Similarly to \cite[Section 4.3]{riomar}, in order to glue some facets of $\calP$ we will exploit the usual ideal triangulation of the figure-eight knot complement made of two regular tetrahedra, as shown in Figure \ref{fig:triangulation_figure8}.
The resulting complex contains four triangular faces $P,J,F,R$ and two edges.
Each edge of the complex has valence six and the return map around an edge is trivial.
We call $\phi_P,\phi_J,\phi_F$ and $\phi_R$ the corresponding face pairings.

\begin{figure}[htbp]
\labellist
\hair 10pt
\pinlabel {$\calT$} at -20 60
\pinlabel {$a(\calT)$} at 370 60
\endlabellist
\centering
\includegraphics[width=9 cm]{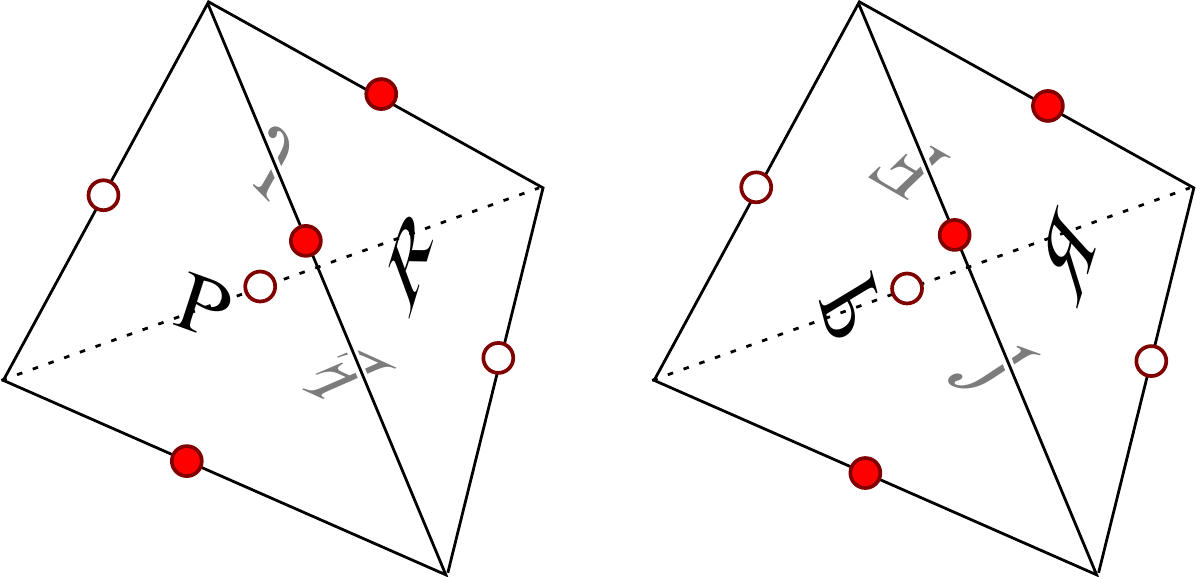}
\nota{This is the well-known ideal triangulation of the figure-eight knot complement.
The two edges of the triangulation have valence six and are marked with
solid and hollow dots.
The resulting complex contains four triangular faces, labeled $P$, $J$, $F$ and $R$.
The orientation of the labels in the figure determines the gluing pattern.
We identify the upper tetrahedral facet $\calT$ of $\calP$ with the tetrahedron on the left and the lower tetrahedral facet $a(\calT)$ with the tetrahedron on the right, in such a way that for every upper positive facet $\calF$ of $\calP$, the face $\calT\cap\calF$ has the same label $X\in\{P,J,F,R\}$ as its antipode $a(\calT\cap\calF)$.}\label{fig:triangulation_figure8}
\end{figure}

By calling $P',J',F',R'$ and $P'',J'',F'',R''$ respectively the faces of the tetrahedron on the left in Figure \ref{fig:triangulation_figure8} and of that on the right in the obvious way, the face pairings are induced by the maps
$$\phi_P:(P',J',R',F')\rightarrow (P'',F'',J'',R''),$$
$$\phi_R:(P',J',R',F')\rightarrow (F'',P'',R'',J''),$$
$$\phi_F:(P',J',R',F')\rightarrow (R'',P'',J'',F''),$$
$$\phi_J:(P',J',R',F')\rightarrow (F'',J'',P'',R''),$$
so that $X'$ is glued with $X''$ through the map $\phi_X$ for each $X\in\{P,R,F,J\}$.

The two tetrahedra of the triangulation may be interpreted as the two tetrahedral facets of the polytope $\calP$. Each of them is adjacent along its triangular faces to four positive facets of $\calP$.

At once, we fix an identification of the upper tetrahedral facet $\calT$ of $\calP$ with the tetrahedron on the left and the lower tetrahedral facet $a(\calT)$ with the tetrahedron on the right, in such a way that for every upper positive facet $\calF$ of $\calP$, the face $\calT\cap\calF$ has the same label $X\in\{P,R,F,J\}$ as its antipode $a(\calT\cap\calF)$.

\begin{rem}\label{rem:PRFJ}
There is a one-to-one correspondence between the 2-strata $\{P,R,F,J\}$ of the figure-eight knot complement's triangulation and the upper (or lower) positive facets of $\calP$.
In particular, recalling Proposition \ref{prop:symmetries}, the $\mathfrak S_4$-factor of $\mathrm{Sym}(\calP)$ is identified with the permutation group of the set $\{P,R,F,J\}$.
\end{rem}

\begin{rem}\label{rem:equatorial_pairs}
Moreover, every pair of positive (or negative) facets of $\calP$ is adjacent to exactly one equatorial facet (this can be seen in Figure \ref{fig:facets}). Thus, there is a one-to-one correspondence between the edges of a single chosen tetrahedron (for instance the one on the left) and the equatorial facets of $\calP$.
\end{rem}

\paragraph{Gluing the positive facets.}
Let us begin by defining the face pairings on the positive facets of $\calP$.
We wish to do this in such a way that the tetrahedral facets are glued together as in the ideal triangulation of the figure-eight knot complement of Figure \ref{fig:triangulation_figure8}.

Note that each of the face pairings of that triangulation induces a unique simplicial map between the two tetrahedra, and therefore it defines a bijection from the set of the upper positive facets of $\calP$ to the set of the lower positive facets.
In particular, by Proposition \ref{prop:symmetries}, for each of these pairing maps there is a unique symmetry of the polytope $\calP$ which acts on the positive facets in the prescribed way. By a slight abuse of notation, we call these symmetries $\phi_P, \phi_R, \phi_F, \phi_J\in\mathrm{Sym}(\calP)$.
Recalling Remark \ref{rem:PRFJ}, these maps can be described as follows:
\begin{equation}\label{eq:positive_maps}
\begin{matrix}
\phi_P=a\circ(JFR),&&
\phi_R=a\circ(PFJ),\\
\phi_F=a\circ(PRJ),&&
\phi_J=a\circ(PFR).
\end{matrix}
\end{equation}

\paragraph{Gluing the tetrahedral facets.}
Let us now define the face pairings on the tetrahedral facets of the polytope $\calP$. In this case, we will always use restrictions of the same symmetry of $\calP$.

There exists an orientation-reversing, fixed-point-free isometric involution $g$ of the figure-eight knot complement.
The quotient of the figure-eight knot complement under $g$ is the Gieseking manifold, which is the cusped hyperbolic $3$-manifold of minimal volume.

In terms of the action on the triangulation in Figure \ref{fig:triangulation_figure8}, the involution $g$ exchanges the two tetrahedra, and acts on the triangular faces of the complex as the permutation
$$P\leftrightarrow F,\; R\leftrightarrow J.$$

Again, by Proposition \ref{prop:symmetries}, there is a unique isometric involution $g$ of the polytope $\calP$ which maps the upper positive facets to the lower positive facets in the prescribed way.
Recalling Remark \ref{rem:PRFJ}, this map is described by
\begin{equation}\label{eq:giseking-positive}
g=a\circ(PF)(JR)\in\mathrm{Sym}(\calP).
\end{equation}

An important consequence of the fact that the map $g$ induces an automorphism of the triangulation of the figure-eight knot complement is that it preserves the face pairings on the positive facets of the polytope $\calP$. This is expressed by the equations
\begin{equation}\label{eq:positive-letters}
 g\circ \phi_P\circ g= \phi_F^{-1},\quad g\circ \phi_R\circ g= \phi_J^{-1}.
\end{equation}

\paragraph{Gluing the negative facets.}
Finally, we define the face pairings for the negative facets of $\calP$. Once more, we wish to choose restrictions of a symmetry of $\calP$ which induces an isometry of the figure-eight knot complement. Note that there is an isometric involution of the figure-eight knot complement which maps each tetrahedron to itself and acts on the triangular faces through the permutation
$$P\leftrightarrow R,\; F\leftrightarrow J.$$
Again, by Proposition \ref{prop:symmetries}, this permutation of the upper positive facets induces an isometric involution $i$ of the polytope $\calP$. Recalling Remark \ref{rem:PRFJ}, its description is 
\begin{equation}\label{eq:involution-positive}
i=(PR)(FJ)\in\mathrm{Sym}(\calP).
\end{equation}
Note that, in contrast with the previously chosen pairing maps, the involution $i$ lies in the $\mathfrak S_4$-factor of $\mathrm{Sym}(\calP)$.

Once again, let us note that the fact that $i$ induces an automorphism of the figure-eight knot complement is expressed by the equations
\begin{equation}\label{eq:positive-negative}
i\circ \phi_P \circ i=\phi_R,\quad i\circ \phi_F \circ i = \phi_J.
\end{equation}

We are finally ready to define the desired manifold $\calM$. 
\begin{defn}\label{def:manifold}
We define $\calM$ to be the space obtained from the polytope $\calP$ as follows:
\begin{itemize}
\item each point $p$ of an upper positive facet $X$ is identified with $\phi_X(p)$, where $\phi_X$ is defined by \eqref{eq:positive_maps}; 
\item each point $p$ of a tetrahedral facet is identified with $g(p)$, where $g$ is defined by \eqref{eq:giseking-positive};
\item each point $p$ of a negative facet is identified with $i(p)$, where $i$ is defined by
\eqref{eq:involution-positive}.
\end{itemize}
\end{defn}

\subsection{Proof of Theorem \ref{teo:main1}}\label{sec:hyperbolicstructure}

The faces of the polytope $\calP$ induce a natural stratification of the complex $\calM$.
In the sequel, we prove that $\calM$ is indeed a complete hyperbolic manifold with totally geodesic boundary. Note that, in order to define the manifold $\calM$, we have paired all the facets of the polytope $\calP$ except the equatorial ones. The equatorial facets will tessellate the boundary of $\calM$. Let us first check that the chosen pairing maps do not introduce self-pairings between neither positive, negative or tetrahedral facets of $\calP$. This is fairly obvious in the case of positive facets, since the upper ones are paired to the lower ones, and in the case of the two tetrahedral facets (which are paired to each other).

Note that every upper (resp. lower) positive facet is adjacent to a \emph{unique} lower (resp. upper) negative facet. This induces a natural one-to-one correspondence between the set of positive facets and the set of negative facets of $\calP$. In terms of the corresponding space-like vectors, it is given by $f\left(x_0,x_1,x_2,x_3,x_4\right)=\left(x_0,x_1,x_2,x_3,-1/{x_4}\right)$.  Now, by \eqref{eq:involution-positive}, the involution $i$ does not preserve any of the positive facets; therefore, it does not preserve any of the negative ones.

The associated correspondence between the positive and negative facets of $\calP$ sends each upper (resp. lower) positive facet to the unique lower (resp. upper) negative facet adjacent to it.
The function $f$ does not induce an isometry of $\matH^4$, but does nonetheless commute with all elements of $\mathrm{Sym}(\calP)$. Therefore,
the action of any element of $\mathrm{Sym}(\calP)$ on the set of negative facets can be directly inferred by its action on the set of positive facets, and by composing it with the function $f$.
Now, by \eqref{eq:involution-positive}, the involution $i$ does not preserve any of the positive facets; therefore, it does not preserve any of the negative ones. 

Following Thurston \cite{notes} (see also Ratcliffe \cite[Chapter 11]{ratcliffe}), we can reduce the issue of proving that $\cal{M}$ is a hyperbolic manifold with totally geodesic boundary to a purely $3$-dimensional problem: it suffices to check that the links of the ideal vertices of $\calP$ are paired together to produce Euclidean $3$-manifolds (perhaps with totally geodesic boundary), and that the links of the finite vertices (which are all adjacent to some equatorial facet) are paired together to produce $3$-dimensional hemispheres.

\paragraph{The equatorial ideal vertices.}
Let us begin with the equatorial ideal vertices of $\calP$. Recall that the link of each such vertex is a rectangular parallelepiped, shown in Figure \ref{fig:links}-(1). By gluing these $12$ parallelepipeds together according to the pairing maps, we obtain a piecewise Euclidean complex $\calE$. We now show that $\calE$ is indeed a (possibly disconnected) Euclidean $3$-manifold with totally geodesic boundary.

Now, the various edges of the parallelepiped fall into three types, according to the type of the ridges of $\calP$ which they correspond to: PN, PE and NE.
Note, moreover, that all the pairing maps preserve the facet type. Therefore, also the edges of the complex $\calE$ fall into the types PN, PE and NE. The edges of the first type lie in the interior of the complex $\calE$, while the edges of types PE and NE lie in its boundary.

Consider the abstract graph $\calG$ whose nodes correspond to the edges of type PN of the above parallelepipeds, with an arc connecting the nodes corresponding to the edges $E_1$ and $E_2$ if there is a pairing map between the faces of the corresponding two parallelepipeds mapping $E_1$ to $E_2$. Since an edge of type PN is adjacent to exactly two paired rectangular faces, the graph $\calG$ is a union of cycles, called the \emph{edge cycles} of the gluing. To each edge cycle, there corresponds a \emph{return map} from any of the edges which make up the cycle to itself: simply follow the sequence of pairing maps until the cycle closes up. There are clearly only two possibilities for each return map: either it is the identity or it acts by exchanging the vertices.

Since all the parallelepipeds are right-angled, in order to check that the edges of type PN of the complex $\calE$ are non-singular, we need to check that all these edge cycles have length $4$ and that all the return maps are trivial. The latter condition assures that the links of the midpoints of the edges of the complex $\calE$ are indeed spheres and not projective planes.

The pairing maps involved for each edge of type PN fall into two types: the involution $i$ for the negative facets and the pairing maps $\phi_P$, $\phi_J$, $\phi_F$ and $\phi_R$ for the positive ones. These two types of pairing maps clearly alternate in each edge cycle. Now, because of Equation \eqref{eq:positive-negative}, we observe that the sequences of pairing maps are of one of the two types
\begin{equation}\label{eq:face-cycles}i\circ \phi_P \circ i\circ {\phi_R}^{-1}=\mathrm{id},\quad i\circ \phi_F \circ i \circ {\phi_J}^{-1} = \mathrm{id}.\end{equation}
The equation above proves that these face cycles have length at most $4$. In order to verify that their length is exactly $4$, we need to take a closer look at the behavior of the pairing maps on the positive facets, and verify that none of the ridges of $\calP$ of type PN is mapped to itself under $\phi_R^{-1}$, $i \circ \phi_{R}^{-1}$ or $\phi_P\circ i \circ \phi_R^{-1}=i$ (and similarly, that the equivalent statement holds for the cycles of the second type).

Note that each ridge of type PN determines a unique positive facet adjacent to it, and that these fall into upper and lower facets. The isometry $i$ preserves these two groups, while the maps $\phi_P$, $\phi_J$, $\phi_F$ and $\phi_R$ exchange them. This implies that none of these latter maps can take a ridge of type PN to itself, and neither can the maps $i \circ \phi_{R}^{-1}$ or $i \circ {\phi_J}^{-1}$.
Finally, the involution $i$ does not preserve any of the positive facets by Equation \eqref{eq:involution-positive}. Note that by Equation \eqref{eq:face-cycles} all the return maps are induced by the identity map of the polytope $\calP$, and therefore all return maps are trivial. 

Concerning the edges of $\calE$ of type PE and NE, they are also easily seen to be non-singular. This is a consequence of the fact that all the dihedral angles between the equatorial facets of $\calP$ and the positive and negative facets are right.

Having shown that the edges of the complex $\calE$ are non-singular, we now turn our attention to its vertices. Note that all the vertices lie in the boundary of $\calE$. 
Let us call $\calL$ the link of each such vertex, which is tessellated by right-angled spherical triangles.  In order to prove that the complex $\calE$ is a Euclidean manifold, it remains to show that $\calL$ is isometric to a hemisphere of $\mathbb{S}^2$.  By the previous argument concerning the edges of type PN, we obtain that $\calL$ is indeed a (non-singular) spherical surface with totally geodesic boundary: therefore it can only be a hemisphere.   

\paragraph{The upper and lower ideal vertices.}
We now deal with the upper and lower ideal vertices of $\calP$. Recall that the link of each such vertex is a right prism over an equilateral triangle, shown in Figure \ref{fig:links}-(2).

The pairings along the positive facets glue these prisms together along their rectangular faces to produce a Euclidean manifold $T\times I$, where $T$ is the torus corresponding to the cusp section of the figure-eight knot complement and $I$ is a closed interval. The torus $T$ is tessellated by eight Euclidean equilateral triangles, one for each upper or lower ideal vertex of $\calP$. The eight triangles which tessellate the boundary component $T\times\{0\}$ correspond to the vertex figures of the two tetrahedral facets, while the triangles which tessellate $T\times \{1\}$ correspond to the ideal vertex figures of negative facets.

Therefore, the pairing maps between the tetrahedral facets induce an isometric involution of the torus $T\times\{0\}$, and in  a similar way the pairings between the negative facets induce an isometric involution of the torus $T\times\{1\}$. It is sufficient to check that both involutions are fixed-point free.

The pairings along the tetrahedral facets come from the involution $g$ defined by \eqref{eq:giseking-positive}. The involution $g$ defines a fixed-point-free involution of the figure-eight knot complement such that the quotient is the Gieseking manifold. Therefore, also its action on the torus $T\times\{0\}$ is fixed-point-free, and the quotient is a Klein bottle tessellated by four equilateral triangles.

In the case of the negative facets, the pairing map is given by the involution $i$ defined by \eqref{eq:involution-positive}. This induces an orientation-preserving involution of the figure-eight knot complement which is, however, not fixed-point-free. The set of fixed points is a knot $K$ in the figure-eight knot complement. From Figure \ref{fig:triangulation_figure8}, one observes that the intersection of the knot $K$ with each of the two tetrahedra tessellating the figure-eight knot complement is a geodesic segment connecting the midpoints of a pair of opposite edges: in particular it is disjoint from small enough horospheres around the ideal vertices. This implies that the action of the involution $i$ on the torus $T\times\{1\}$ is indeed fixed-point-free. The quotient is a Euclidean torus tessellated by four equilateral triangles.

Summarizing the above discussion, the effect of the chosen pairing maps is to identify the faces of the links of the upper and lower ideal vertices of $\calP$ to produce a Euclidean manifold with a singular fibration over the interval $I$. The fiber above $0$ is a one-sided Klein bottle, while the fiber above $1$ is a one-sided torus. All other fibers are two-sided tori, tessellated by $8$ equilateral triangles.

\paragraph{The finite vertices.}
We lastly deal with the finite vertices of $\calP$. Since all the finite vertices are adjacent to some equatorial facet, these correspond to points in the boundary of $\calM$. Recall that the link of each such vertex is the spherical tetrahedron shown in Figure \ref{fig:links}-(3), which has one edge with dihedral angle $\pi/3$ and all the others right-angled. 

These 24 spherical tetrahedra are glued together via the pairing maps to produce a piecewise spherical complex $\calS$, corresponding to the link of a vertex lying in the boundary of $\calM$. We have to show that each component of $\calS$ is indeed homeomorphic to a $3$-dimensional disk, realized as a hemisphere of $\matS^3$.

We begin with some preliminary considerations that will be useful later on.
We have already proven that the cusp sections of $\calM$ are Euclidean manifolds. Therefore, all the interior points of the unbounded strata of $\calM$ are non-singular.
Since every ridge of the polytope $\calP$ has at least one ideal vertex, we know that the 2-strata of $\calM$ are non-singular.
This translates into the fact that every edge of the complex $\calS$ is non-singular.

For the same reason, all the edges of the complex $\calM$ which connect an ideal vertex to a finite vertex are non-singular.
This translates to the fact that every vertex of $\calS$ of type PPN and PNE is also non-singular.

Let us now come back to the 24 spherical tetrahedra that tessellate the complex $\calS$.
By gluing them together via the pairing maps of positive facets, we obtain four spherical polyhedra, each isometric to the intersection of two orthogonal half-spaces of $\matS^3$. 
In this regard, recall that the figure-eight knot's ideal triangulation has all edges of valence $6$ and note that the types of all the strata are preserved by isometries of $\calP$. The result is represented in Figure \ref{fig:arancia}.

\begin{figure}[htbp]
\begin{center}
\includegraphics[width=0.3\textwidth]{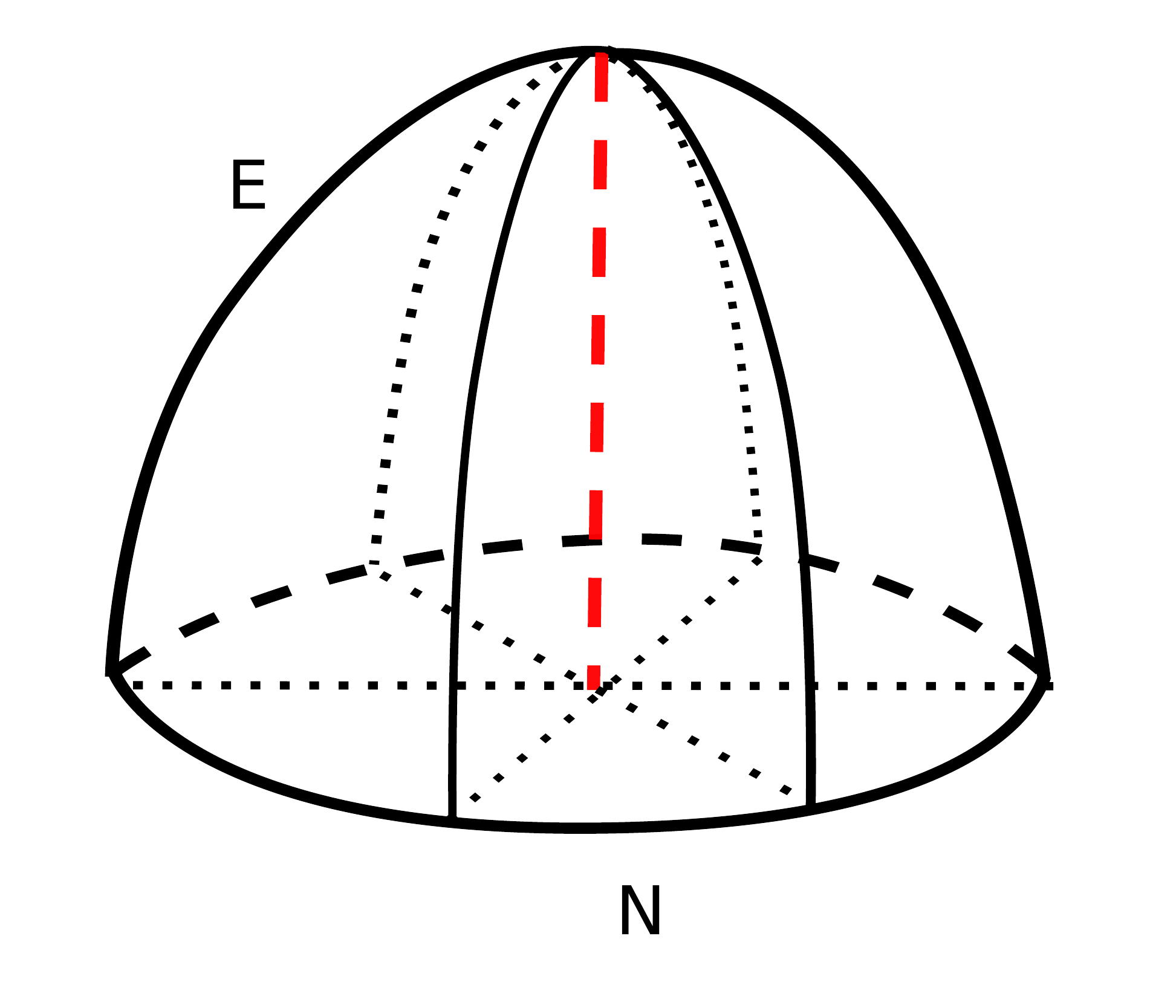}
\nota{The result of the gluing of the spherical vertex figure along the positive facets is given by four copies of the spherical polyhedron above. Each copy is tessellated by six spherical tetrahedra. The two faces of this polyhedron correspond respectively to equatorial (on the top) and  negative (on the bottom) facets of $\calP$.}\label{fig:arancia}
\end{center}
\end{figure}

The boundary of this spherical polyhedron is obviously a union of two disks, the first one is tessellated by triangles corresponding to the negative facets of $\calP$, while the second one is tessellated by triangles corresponding to the equatorial facets.

Let us now glue the faces of the resulting polyhedron via the pairing maps corresponding to the negative facets. These are induced by the involution $i$. We claim that the four disks corresponding to the negative facets are identified in pairs by isometries. The induced pairing maps are isometries on each triangle and extend continuously with their inverses to each disk by \eqref{eq:positive-negative}. There remains to show that no disk is mapped to itself under the pairing maps. However, if this were the case, the center of such disk would be a fixed point, and this would imply the existence of a singular point corresponding to a vertex of type PPN, which was previously excluded.

This proves that the gluings at the spherical vertex links along the positive \emph{and} negative facets produces two balls $B_1$ and $B_2$, each isometric to a hemisphere of $\mathbb{S}^3$ and tessellated by $12$ spherical tetrahedra. 

\paragraph{Conclusion of the proof.}
We have finally shown that $\calM$ is a hyperbolic 4-manifold with totally geodesic boundary. Since the manifold $\calM$ is tessellated by a single copy of the polytope $\calP$, by \eqref{eq:volP} we have
$$\mathrm{Vol}(\calM)=\frac{4\pi^2}{3}=V_{\mathrm{min}}.$$

\begin{rem}
The manifold $\calM$ is non-orientable. The reason for this is that the symmetries of $\calP$ used to define the pairing maps on the facets are orientation-preserving.
\end{rem}

To conclude the proof of Theorem \ref{teo:main1}, we simply define
$$\calN=\calD(\calM)$$
to be the \emph{double} of $\calM$, that is, the hyperbolic manifold obtained from two copies of $\calM$ by identifying their boundary through the map induced by the identity.
Clearly, the volume of $\calN$ is equal to $8\pi^2/3=2\cdot V_{\mathrm{min}}$.

Finally, we show that the manifold $\calM$ is commensurable with the polytope $\calP$. This is a straightforward consequence of the following:

\begin{lemma}\label{lemma:commensurability}
Let $M$ be a hyperbolic manifold obtained by pairing the facets of some copies of a Coxeter polytope $P$. Suppose that each pairing map is induced by a symmetry of the polytope $P$. Then, the orbifolds $M$ and $P$ are commensurable.

\end{lemma}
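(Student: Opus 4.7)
The plan is to realise both $M$ and $P$ as finite orbifold covers of a single common quotient of $\matH^n$, from which commensurability is immediate. Write $P = \matH^n/\Gamma_P$, where $\Gamma_P < \Isom(\matH^n)$ is the Coxeter group generated by reflections through the facets of $P$. Let $S < \Isom(\matH^n)$ denote the (finite) symmetry group of $P$, so that by hypothesis every face-pairing used to build $M$ is the restriction of some element of $S$. Finally, set $G = \langle \Gamma_P, S \rangle < \Isom(\matH^n)$.

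The first step is to verify that $\Gamma_P$ is a normal subgroup of $G$ of finite index. Normality is immediate: for any $s \in S$ and any reflection $r_F \in \Gamma_P$ through a facet $F$ of $P$, the conjugate $s\, r_F\, s^{-1} = r_{s(F)}$ is again a reflection through a facet of $P$, hence lies in $\Gamma_P$. Finite index follows because $G = \Gamma_P \cdot S$ with $S$ finite. Consequently, the natural projection $P = \matH^n/\Gamma_P \to \matH^n/G$ is a finite orbifold cover.

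The main step is to exhibit an analogous finite orbifold cover $f \colon M \to \matH^n/G$. I would fix, for each of the finitely many copies $P_1, \ldots, P_k$ of $P$ used to build $M$, an isometric identification with the standard copy $P \subset \matH^n$, and then define $f$ on each $P_i$ as the composition $P_i \cong P \hookrightarrow \matH^n \to \matH^n/G$. This descends to a well-defined map on the quotient $M$: if $p \in P_i$ is identified with $q \in P_j$ by a face-pairing $\phi \in S$, so that $q = \phi(p)$ under the fixed identifications, then the images of $p$ and $\phi(p)$ in $\matH^n/G$ coincide, thanks to $\phi \in S \subset G$. Each local map $P_i \to \matH^n/G$ is a local isometry, so $f$ is a local isometry between complete hyperbolic orbifolds of finite volume, hence a finite orbifold cover. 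Combined with the previous step, this realises $M$ and $P$ as finite covers of $\matH^n/G$, and commensurability follows.

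The principal obstacle is organisational rather than conceptual: one must carefully choose the identifications $P_i \cong P$ so that the face-pairings really do arise as restrictions of elements of $S$, and check that $f$ is single-valued on the identified facets. A minor further point is that $M$ may have non-empty totally geodesic boundary (as for the manifold $\calM$ of Section \ref{sec:minvolume}); in that case one can either run the same argument while tracking boundaries, or first pass to the double $\calD(M)$, whose additional face-pairings are reflections through facets of $P$ and hence already belong to $\Gamma_P \subset G$.
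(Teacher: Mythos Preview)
Your proposal is correct and is essentially the same argument as the paper's, only spelled out in more detail: the paper simply asserts that ``by analyzing a holonomy representation \ldots\ $M$ covers the orbifold $P/_{\mathrm{Sym}(P)}$'', and your group $G=\langle\Gamma_P,S\rangle$ is precisely the orbifold fundamental group of $P/_{\mathrm{Sym}(P)}$, so your map $f\colon M\to\matH^n/G$ is exactly the covering the paper alludes to.
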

\begin{proof}
By analyzing a holonomy representation for the hyperbolic structure of $M$, it is not difficult to conclude that $M$ covers the orbifold $P/_{\mathrm{Sym}(P)}$.
\end{proof}

The proof of Theorem \ref{teo:main1} is completed.

\section{Non-arithmetic manifolds}\label{sec:nonarithemtic}

In this section, we prove Theorem \ref{teo:main3} by the ``interbreeding'' technique.
Let us first recall Gromov and Piatetski-Shapiro's theorem.

\begin{teo}[Gromov, Piatetski-Shapiro \cite{G-PS}]\label{teo:G-PS}
Let $M_1$ and $M_2$ be complete, finite volume, hyperbolic manifolds with non-empty, totally geodesic boundary.
Suppose that $\partial M_1$ and $\partial M_2$ are isometric, and let $\phi:\partial M_1\rightarrow \partial M_2$ be an isometry.
Let $M$ be the hyperbolic manifold obtained by gluing $M_1$ and $M_2$ through the isometry $\phi$.
If $M$ is arithmetic, then it is commensurable with the doubles $\calD(M_1)$ and $\calD(M_2)$.
\end{teo}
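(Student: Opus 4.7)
The plan is to exploit the arithmeticity of $M$ to show that the reflection across the totally geodesic hyperplane carrying the common boundary $\Sigma := \phi(\partial M_1) = \partial M_2 \subset M$ lies in the commensurator of $\Gamma := \pi_1(M)$ inside $G := \Isom(\matH^n)$. Once this is established, a short covolume bookkeeping identifies each $\pi_1(\calD(M_i))$ as a finite-index subgroup of a lattice commensurable with $\Gamma$, which is exactly the conclusion.

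First I would fix an identification of the universal cover of $M$ with $\matH^n$, realizing $\Gamma$ as a lattice in $G$. The preimage of $\Sigma$ in $\matH^n$ is a disjoint union of totally geodesic hyperplanes; pick one and call it $H$, so that $H$ bounds two half-spaces $H^\pm$ with $\mathrm{Stab}_\Gamma(H^+) \cong \pi_1(M_1)$ and $\mathrm{Stab}_\Gamma(H^-) \cong \pi_1(M_2)$. Let $\rho \in G$ denote the reflection across $H$. The fundamental group $\pi_1(\calD(M_1))$ then sits as a subgroup of index at most $2$ in $\langle \pi_1(M_1), \rho \rangle \leq G$, and is itself a lattice of covolume $2\,\mathrm{Vol}(M_1)$; the same holds with $M_2$ in place of $M_1$.

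The main step, and the only place where arithmeticity actually enters, will be to show that $\rho \in \mathrm{Comm}_G(\Gamma)$. The idea is that because $\Sigma$ is embedded and totally geodesic in the arithmetic manifold $M$, the stabilizer $\mathrm{Stab}_\Gamma(H) = \pi_1(\Sigma)$ is itself arithmetic inside $\mathrm{Stab}_G(H) \cong \Isom(\matH^{n-1}) \times \matZ/_{2\matZ}$, and the hyperplane $H$ is defined over the number field underlying the arithmetic structure of $\Gamma$. Consequently $\rho$ is induced by a $\matQ$-rational element of the ambient algebraic group whose arithmetic lattice points are commensurable with $\Gamma$, and therefore $\rho$ commensurates $\Gamma$. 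This will be the hard part: it calls upon the classification of arithmetic hyperbolic lattices and the fact that codimension-one totally geodesic submanifolds of arithmetic hyperbolic manifolds arise from $\matQ$-subgroups stable under the associated reflection.

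Once $\rho \in \mathrm{Comm}_G(\Gamma)$ is in hand, the subgroup $\Lambda := \langle \Gamma, \rho \rangle \leq G$ is a lattice commensurable with $\Gamma$. Since $\langle \pi_1(M_1), \rho \rangle \leq \Lambda$ contains $\pi_1(\calD(M_1))$ with index at most $2$, and $\pi_1(\calD(M_1))$ is itself a lattice, the index $[\Lambda : \pi_1(\calD(M_1))]$ is finite -- it equals the ratio of the covolumes $\mathrm{Vol}(\matH^n/\Lambda)$ and $\mathrm{Vol}(\calD(M_1))$. Hence $\pi_1(\calD(M_1))$ is commensurable with $\Lambda$, and therefore with $\Gamma = \pi_1(M)$; repeating the argument with $M_2$ in place of $M_1$ completes the proof.
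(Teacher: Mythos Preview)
The paper does not give its own proof of this theorem: it is quoted as a result of Gromov and Piatetski--Shapiro \cite{G-PS} and used as a black box in Section~\ref{sec:nonarithemtic}. So there is no in-paper argument to compare against; your sketch is essentially the classical proof, and the overall strategy --- show that the reflection $\rho$ through $H$ lies in $\mathrm{Comm}_G(\Gamma)$ because $H$ is defined over the field of definition of the arithmetic lattice, then deduce commensurability of $\pi_1(\calD(M_i))$ with $\Gamma$ --- is exactly right.

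There is, however, a genuine technical gap in your last paragraph. From $\rho\in\mathrm{Comm}_G(\Gamma)$ you conclude that $\Lambda:=\langle\Gamma,\rho\rangle$ is a lattice commensurable with $\Gamma$. This does not follow: commensurating is weaker than normalising, and even though $\rho^2=1$, the group $\langle\Gamma,\rho\rangle$ can fail to be discrete (words alternate arbitrarily many $\rho$'s and elements of $\Gamma$, and $\rho\Gamma\rho$ need not equal $\Gamma$). Your covolume argument for $[\Lambda:\pi_1(\calD(M_1))]<\infty$ therefore has no footing.

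The fix is to avoid $\Lambda$ altogether. You already observed that $\Gamma_1^{\mathrm{mir}}:=\langle\pi_1(M_1),\rho\rangle$ contains $\pi_1(\calD(M_1))$ with index at most~$2$; note that $\Gamma_1^{\mathrm{mir}}$ is \emph{automatically} a lattice, with no arithmeticity needed --- it is the holonomy group of the reflection orbifold obtained from $M_1$ by silvering $\partial M_1$, and its covolume is $\mathrm{Vol}(M_1)$. Since $\pi_1(M_1)\subset\Gamma$ and $\rho\in\mathrm{Comm}_G(\Gamma)$, we have $\Gamma_1^{\mathrm{mir}}\subset\mathrm{Comm}_G(\Gamma)$. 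Now invoke the fact (implicit in your ``$\rho$ is a $\matQ$-rational element'' discussion) that for an arithmetic $\Gamma$ any lattice contained in $\mathrm{Comm}_G(\Gamma)$ --- equivalently, in the $\matQ$-points of the ambient algebraic group --- is commensurable with $\Gamma$: a finitely generated subgroup of $\mathbf G(\matQ)$ lies in some $\mathbf G(\matZ[1/N])$, and a lattice there must project to a compact set in each $\mathbf G(\matQ_p)$, forcing commensurability with $\mathbf G(\matZ)$. This yields $\pi_1(\calD(M_1))$ commensurable with $\Gamma$, and the same argument applies to $M_2$.
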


A straightforward consequence of this theorem is that if $\calD(M_1)$ and $\calD(M_2)$ are incommensurable, then the manifold $M$ constructed by gluing $M_1$ to $M_2$ along their totally geodesic boundaries is non-arithmetic.

\begin{proof}[Proof of Theorem \ref{teo:main3}]
First of all, we observe that the manifold $\calN$ of Theorem \ref{teo:main1} contains two totally geodesic copies $\calG_1$ and $\calG_2$ of the Gieseking manifold, which are the result of the gluing of the tetrahedral facets of $\calP$.

We call $\calN_{\slash\!\!\slash}$ the manifold obtained by cutting $\calN$ along the hypersurface $\calG_1$. Clearly, $\calN_{\slash\!\!\slash}$ can be obtained from two copies of the polytope $\calP$ but without pairing the tetrahedral facets of one of them.
In particular, we get that $\partial\calN_{\slash\!\!\slash}$ is isometric to the figure-eight knot complement.

Recall that $\calN$ (and thus $\calN_{\slash\!\!\slash}$) is non-orientable. Let $\widetilde{\calN_{\slash\!\!\slash}}$ be the orientable double covering of $\calN_{\slash\!\!\slash}$.
The manifold $\widetilde{\calN_{\slash\!\!\slash}}$ has two boundary components, each isometric to the figure-eight knot complement.
As we already noted, the latter 3-manifold has an isometric fixed-point-free involution $\iota$.
We call $\calN'$ the hyperbolic manifold obtained by quotienting one of the two components of $\partial\widetilde{\calN_{\slash\!\!\slash}}$ by $\iota$.

Let us now consider the orientable hyperbolic $4$-manifold $\calZ$ described in \cite[Remark 4.4]{S}. Its boundary is totally geodesic and isometric to the figure-eight knot complement.
To build the desired manifold $\calH$, we simply glue $\calN_{\slash\!\!\slash}$ to $\calZ$ through any isometry of their boundaries.
Similarly, to build the manifold $\calH'$, we glue $\calN'$ to $\calZ$.

By construction (recall Lemma \ref{lemma:commensurability}), the manifold $\calD(\calZ)$ is commensurable with the rectified 5-cell $\calR$, while $\calD(\calN_{\slash\!\!\slash})$ and $\calD(\calN')$ are commensurable with the polytope $\calP$.
The non-arithmeticity of $\calH$ and $\calH'$ follows from Proposition \ref{prop:PRC-not-commensurable} and Theorem \ref{teo:G-PS}.
\end{proof}

\begin{table}[!hb]
\centering

\begin{tabular}{lll}
 \begin{tabular}[c]{@{}l@{}} \it Stefano Riolo\\ \it Institut de math\'ematiques\\ \it University of Neuch\^atel\\ \it Rue Emile-Argand 11\\ \it CH-2000 Neuch\^atel\\ \it Switzerland\\ \it stefano (dot) riolo (at) unine.ch\end{tabular} & \begin{tabular}[c]{@{}l@{}} \it Leone Slavich\\ \it Department of Mathematics\\ \it University of Pisa\\ \it Largo Pontecorvo 5\\ \it I-56127 Pisa\\ \it Italy\\ \it leone (dot) slavich (at) gmail.com\end{tabular}
\end{tabular}
\end{table}

\end{document}